\documentclass[a4paper,11pt,reqno]{article}

\usepackage{tikz-cd} 

\usepackage{relsize}
\usepackage{cite}
\usepackage{color}
\usepackage{hyperref}
\hypersetup{
  colorlinks   = true, 
  urlcolor     = green, 
  linkcolor    = black,
  citecolor   = red 
}
\usepackage{amsmath,amsthm,amssymb,mathtools}

\usepackage[margin=2.6cm]{geometry}

\makeatletter
\usepackage{comment}
\let\wfs@comment@comment\comment
\let\comment\@undefined

\usepackage{changes}
\let\wfs@changes@comment\comment
\let\comment\@undefined

\newcommand\comment{%
    \ifthenelse{\equal{\@currenvir}{comment}}
    {\wfs@comment@comment}
    {\wfs@changes@comment}%
}

\definechangesauthor[name=Matteo, color=red]{MAT}
\definechangesauthor[name=Martino, color=blue]{MAR}
\definechangesauthor[name=Eimear, color=violet]{EIM}
\usepackage{stmaryrd}

\theoremstyle{definition}
\newtheorem{theorem}{Theorem}[section]
\newtheorem{definition}[theorem]{{{Definition}}}
\newtheorem{example}[theorem]{{{Example}}}

\newtheorem{remark}[theorem]{{{Remark}}}

\newtheorem{corollary}[theorem]{{{Corollary}}}
\newtheorem{proposition}[theorem]{{{Proposition}}}
\newtheorem{lemma}[theorem]{{{Lemma}}}


\newcommand{\numberset}{\mathbb}

\newcommand{\F}{\numberset{F}}

\newcommand{\HHH}{\mathcal{H}}
\newcommand{\C}{\mathcal{C}}
\newcommand{\mS}{\mathcal{S}}

\newcommand{\mC}{\mathcal{C}}
\newcommand{\mP}{\mathcal{P}}
\newcommand{\mG}{\mathcal{G}}

\newcommand{\mD}{\mathcal{D}}
\newcommand{\mU}{\mathcal{U}}

\renewcommand{\mG}{\mathcal{G}}

\newcommand{\wt}{\textnormal{wt}}

\newcommand{\Fq}{\F_q}
\newcommand{\Fm}{\F_{q^m}}


\newcommand{\rk}{\textnormal{rk}}


\DeclareMathOperator{\GL}{GL}

\DeclareMathOperator{\PG}{PG}


\newcommand{\Fmk}{[n,k]_{q^m/q}}

\providecommand{\keywords}[1]
{
  	
  \noindent\textbf{{\small \textbf{Keywords.}}} #1
}


\title{Saturating systems and the rank-metric covering radius}

\usepackage{authblk}
\author[1]{Matteo Bonini}
\affil[1]{Aalborg Universitet, Department of Mathematical Sciences, Aalborg, Denmark}

\author[2]{Martino Borello}
\affil[2]{Universit\'e Paris 8, Laboratoire de G\'eom\'etrie, Analyse et Applications, LAGA,
Universit\'e Sorbonne Paris Nord, CNRS, UMR 7539, France}

\author[3]{Eimear Byrne}
\affil[3]{School of Mathematics and Statistics, University College Dublin, Ireland}

\date{}

\begin{document}

\maketitle

\begin{abstract}
    We introduce the concept of a rank-saturating system and outline its correspondence to a rank-metric code with a given covering radius.
    We consider the problem of finding the value of $s_{q^m/q}(k,\rho)$, which is the minimum $\F_q$-dimension of a $q$-system in $\F_{q^m}^k$ that is rank-$\rho$-saturating. This is equivalent to the covering problem in the rank metric. We obtain upper and lower bounds on $s_{q^m/q}(k,\rho)$ and evaluate it for certain values of $k$ and $\rho$. We give constructions of rank-$\rho$-saturating systems suggested from geometry. 
\end{abstract}

\keywords{{\small Linear sets, projective systems, saturating systems, rank-metric codes, covering radius}}

\smallskip
\noindent {\bf MSC2020.} 05B40, 11T71, 51E20, 52C17, 94B75

\tableofcontents

\section*{Introduction}

The relationships between linear codes and sets of points in finite geometries have long been exploited by researchers \cite{calderbank1986geometry,dodunekov1998codes,alfarano2019geometric,davydov1995constructions,davydov2000saturating,1930-5346_2011_1_119}. Indeed, the MDS conjecture was first posed by Segre as a problem on arcs in finite geometry \cite{segre1955curve}. A generator matrix or parity check matrix of a linear code can be constructed from a multiset of projective points. Supports of codewords correspond to complements of hyperplanes in a fixed projective set. This connection yields a `dictionary' between these two fields, which allows one to apply methods from one domain to the other. This approach has been taken in constructing codes with bounded {\em covering radius}, related to {\em saturating sets} in projective space. 

The geometry of rank-metric codes has recently been investigated \cite{alfarano2021linear,randrianarisoa}: rank-metric codes correspond to $q$-systems and linear sets. 
In this paper, we exploit this relationship further: we introduce the notion of a rank-saturating system in correspondence with a rank-metric covering code. 
    
\bigskip
    
    The covering radius of a code is the least positive integer $\rho$ such that the union of the spheres of radius $\rho$ about each codeword is equal to the full ambient space. This fundamental coding theoretical parameter has been widely studied for codes in respect of the Hamming metric \cite{brualdiplesswil,coveringcodes,davydov1995constructions,1930-5346_2011_1_119,davydov2000saturating,denaux2021constructing,davydov2003saturating}, but very few papers on the subject have appeared in the literature on rank-metric codes \cite{byrne2017covering,gadouleauphd}. The covering radius is an indicator of combinatorial properties of a code, such as {\em maximality} and is an invariant of code equivalence. It also gives a measure of its error-correcting capabilities, via a determination of the maximal weight of a correctable error. Several other communication problems can be expressed in terms of covering problems for the Hamming metric \cite{coveringcodes}. For the rank metric, the covering radius of a code has connections with min-rank problems, such as those that arise in {\em index-coding} \cite{bycalderini}. However, as far as the authors are aware, covering radius problems in the rank metric have not yet been approached from a geometric viewpoint.
    
A set $\mS$ of points in the projective space $\PG(k-1,q)$ is call $\rho$-\emph{saturating} if every point of $\PG(k-1,q)$ lies in a projective subspace spanned by $\rho+1$ points of $\mS$ and $\rho$ is the least integer with this property. The equivalence classes of such sets are in bijection with equivalence classes of {\em covering codes}: if a $\rho$-saturating set $\mS$ is identified with the columns of a parity check matrix of a code $\C$, then $\C$ has (Hamming) covering radius $\rho+1$. This yields an interesting connection between coding theory and finite geometry. A key question in this topic concerns the minimal cardinality of a saturating set for fixed $q,k$, and $\rho$. Translated to codes, this asks what the shortest length of an $\F_q$-code of redundancy $k$ and covering radius $\rho+1$ is. A related problem is to obtain bounds on this number and to give constructions of codes or saturating sets that meet these bounds. Geometric methods to these problems have been considered in \cite{davydov2003saturating,denaux2021constructing,davydov2000saturating,ughi1987saturated,davydov1995constructions}, wherein the two main approaches involve constructions using (1) {\em cutting} (or {\em strong}) blocking sets and (2) {\em mixed subgeometries}.

Rank-metric codes have been a source of intense research activity over the last number of years \cite{byrne2017covering,byrne2020partition,gadouleauphd,neriMRD,delsarte1978bilinear,gabidulin1985theory,roth1991maximum,lunardon2018generalized,sheekey2016new}. In this paper, we focus on rank-metric codes that are $\F_{q^m}$-linear subspaces of $\F_{q^m}^{n}$. While there exists a more general description of rank-metric codes simply as linear spaces of matrices, the restriction to the $\F_{q^m}$-linear subspaces of $\F_{q^m}^{n}$ has more immediate connections to finite geometry \cite{alfarano2021linear,randrianarisoa}. In this paper, we introduce the notion of an $\Fmk$ {\em rank}-$\rho$-{\em saturating system}. In analogy with codes for the Hamming metric, it turns out that a rank-$\rho$-saturating system corresponds to a linear code of rank-metric covering radius $\rho$. Such codes have the property that every element of the ambient space is within rank distance at most $\rho$ to some codeword. In our analysis, we will use the notion of an $\Fmk$ system, which is simply an $n$-dimensional $\Fq$-subspace of $\F_{q^m}^k$ whose $\F_{q^m}$-span is the full space and is a $q$-analogue of a projective system. Such $q$-{\em systems} have been used already in \cite{alfarano2021linear} and \cite{randrianarisoa} to describe geometric aspects of rank-metric codes.
Then an $\Fmk$ rank-$\rho$-saturating system is one whose associated {\em linear set} is a $(\rho-1)$-saturating set in $\PG(k-1,q^m)$.

We write $s_{q^m/q}(k,\rho)$ to denote the minimum $\F_q$-dimension of a rank-$\rho$-saturating system in $\F_{q^m}^k$. In this paper, we show that
\begin{align}\label{eq:mainbd}
    \left\lceil \frac{mk}{\rho}\right\rceil-m+\rho\leq s_{q^m/q}(k,\rho)\leq m(k-\rho)+\rho,
\end{align}
for all $q>2$, and a slightly different lower bound for $q=2$.
While the lower bound of \eqref{eq:mainbd} arises from a combinatorial observation, the upper bound is constructive. Furthermore, using the notion of a {\em linear cutting blocking set} \cite{alfarano2020three}, as well as constructions from subgeometries, we obtain sharper upper bounds for specific parameters by constructing rank-saturating systems.

\bigskip

This paper is organised as follows. In Section \ref{sec:back} we outline some background preliminaries. In Section \ref{sec:ranksat} we introduce the notion of a rank-saturating system, give equivalent characterizations of such systems, and outline the connection to the rank covering radius of a code. In Section \ref{sec:bounds} we give upper and lower bounds on the minimum $\F_q$-dimension of a rank-$\rho$-saturating system. In almost all cases, the bounds we establish turn out to be independent of $q$.
The concept of a rank-saturating system allows us to extend in Section \ref{sec:const} classical constructions for the Hamming metric to the rank metric. In particular, we adopt two different approaches:
one construction arises from \emph{linear} cutting blocking sets, first introduced in \cite{alfarano2021linear}, and the other uses subgeometries. In the final section, we 
list some cases for which $s_{q^m/q}(k,\rho)$ is completely determined. 
\section{Background}\label{sec:back}

Throughout this paper, $q$ will denote a fixed prime power, while $m,n,k$ will denote positive integers such that $n\leq km$ and $k\leq n$. We will write $\rho$ to denote a positive integer in $\{1,\ldots,\min\{k,m\}\}$. Vectors will, as a rule, be column-vectors (unless specified otherwise). We write $[n]$ to denote the set $\{1,\ldots,n\}$.\\

Let $\sim$ denote the equivalence relation on the non-zero elements of $\Fq^k$ defined by $u\sim v$ if and only if $u=\lambda v$
for some nonzero element~$\lambda \in \F_q$. The \emph{projective space} $\PG(k-1,q)$ with underlying vector space $\F_q^k$ is the set of equivalence classes for $\sim$, which are called \emph{points}. For a subspace $V\subseteq \Fq^k$, the corresponding collection of points in $\PG(k-1,q)$ form a \emph{projective subspace} of $\PG(k-1,q)$. If $\dim V=k-1$, the corresponding subspace is called a \emph{hyperplane}.\\

For integers $0\le k \le n$ and a prime power $q$, the \emph{Gaussian binomial coefficient}
$$\left[\begin{array}{c}
	         n\\
	         k
	    \end{array}\right]_q:= \prod_{j=0}^{k-1} \frac{q^n-q^{j}}{q^k-q^{j}},$$
denotes the number of $k$-dimensional subspaces of an $n$-dimensional space over $\F_q$. 

\subsection{Linear codes}

Let us start with some basic definitions of coding theory. Classically applied in noisy channel communication, code elements are often called words and therefore commonly represented as row vectors. In this paper we will mainly consider the codes and systems for the rank metric, but we will point out some relations with the more classical Hamming-metric case. An $\F_{q^m}$-linear code of length $n$ is an $\F_{q^m}$-linear space of $\F_{q^m}^{1\times n}$.

\begin{definition} 
Let $u=(u_1,\ldots,u_n)$ and  $v=(v_1,\ldots,v_n )$ in  $\F_{q^m}^{1\times n}$.
\begin{enumerate}
    \item The \emph{Hamming distance} between $u$ and $v$ in  $\F_{q^m}^{1\times n}$ is defined to be the number of coordinates in which they differ, that is: 
$d_H(u,v):=|\{i\in [n] : u_i\neq v_i\}|.$ The \emph{Hamming weight} of $u$ is  
$\wt_{H}(u):=d_{H}(u,0)$. An $[n,k,d]_{q^m}$ \emph{Hamming-metric} \emph{code} $\C$ is a $k$-dimensional $\F_{q^m}$-subspace of $\F_{q^m}^{1\times n}$ such that
$d=d_{H}(\mC):=\min \{ d_{H}(c,c') : c, c' \in \mC, c \neq c'\}.$
If the minimum distance of $\mC$ is not known or is not relevant, we refer to it as an $[n,k]_{q^m}$ code. 
\item The \emph{rank distance} between $u$ and $v$ in $\F_{q^m}^{1\times n}$ is defined to be the $\Fq$-dimension of the vector space spanned by the differences of their coordinates, that is: $ d_{\rk}(u,v):=\dim_{\Fq}\langle u_i-v_i : i \in [n]\rangle_{\Fq}.$ The \emph{rank weight} of $u$ is $\wt_{\rk}(u):=d_{\rk}(u,0)$. 
An $[n,k,d]_{q^m/q}$ \emph{rank-metric code} $\C$ is a $k$-dimensional $\F_{q^m}$-subspace of $\F_{q^m}^{1\times n}$ such that $d=d_{\rk}(\mC):=\min \{ d_{\rk}(c,c') : c, c' \in \mC, c \neq c'\}.$
If the minimum distance $\mC$ is not known or is not relevant, we refer to it as an $[n,k]_{q^m/q}$ code. 
\end{enumerate}
\end{definition}
An $[n,k]_{q^m}$ or an $[n,k]_{q^m/q}$ code $\mC$ is often described in terms of a \emph{generator matrix} $G\in \F_{q^m}^{k\times n}$, which is a full-rank matrix whose rows generate $\C$.   
    The \emph{dual code} of $\mC$ is defined to be:
    \[
      \mC^\perp:=\{ v \in \F_{q^m}^{1\times n} : v\cdot c = 0 \, \, \forall\: c \in \mC \},
    \]
    where for all $x=(x_1,\ldots,x_n),y=(y_1,\ldots,y_n) \in \F_{q^m}^{1\times n}$ we have
    $x\cdot y:= \sum_{j=1}x_jy_j$.
    
Let $\C, \C'$ be a pair of $\F_{q^m}$-linear codes. We say that $\C$ and $\C'$ are equivalent with respect to the Hamming metric if there exists a monomial matrix $M\in\F_{q^m}^{n\times n}$ and a pair of generator matrices $G,G'$ of $\C,\C'$, respectively, satisfying $G'=GM$.
We say that $\C$ and $\C'$ are equivalent with respect to the rank metric, if there exists $A\in\GL_n(q)$ and a pair of generator matrices $G,G'$ of $\C,\C'$, respectively, satisfying $G'=GA$.

We will generally require that the codes we study are {\em nondegenerate} in the following sense.

\begin{definition}
 An $[n,k]_{q^m}$ code $\C$ is Hamming-metric nondegenerate if for every $i\in [n]$ there exists $c\in\C$ such that $c_i\neq 0$.
An $\Fmk$ code $\mC$ is rank-metric nondegenerate if the $\F_q$-span of the columns of any generator matrix of $\C$ has $\Fq$-dimension $n$.
\end{definition}

Note that if a code is degenerate, then it can be isometrically embedded in an ambient space of smaller dimension.

It was shown in \cite[Proposition 3.2]{alfarano2021linear} that $\mC$ is rank-metric nondegenerate if and only if for every $A \in \GL_n(q)$, the code $\mC \cdot A$ is Hamming-metric nondegenerate. Note that, as already observed in \cite[Corollary~6.5]{jurrius2017defining}, nondegenerate rank-metric $[n,k]_{q^m/q}$ codes may exist only if $n\leq mk$. 

\begin{definition}
An $[n,k]_{q^m}$ code $\C$ is \emph{projective} if $d_H(\C^\perp)\geq 3$. 
We define a \emph{projectivisation} of a code $\C$ to be a punctured code $\C^*$ of $\C$ of maximal length such that $d_H((\C^*)^\perp)\geq 3$.  
\end{definition}

A code is called {\em projective} if and only if no generator matrix has two linearly dependent columns. In a projectivisation one erases the minimum number of columns to obtain a projective code. Any pair of codes obtained by projectivization are equivalent. For this reason, it makes sense to talk about {\em the} projectivisation of a code.

\begin{definition}
Let $\mC \leq \F_{q^m}^{1\times n}$.
The Hamming-metric covering radius of $\mC$ is:
$$\rho_{H}(\mC):= \max \{ \min \{ d_{H}(x,c) : c \in \C\} : x \in \F_{q^m}^{1\times n}\}.$$
   The rank-metric covering radius of $\mC$ is:
$$\rho_{\rk}(\mC):= \max \{ \min \{ d_{\rk}(x,c) : c \in \C\} : x \in \F_{q^m}^{1\times n}\}.$$
\end{definition}
 More generally, with respect to an arbitrary distance function $d$ on $\F_{q^m}^{1\times n},$ the covering radius of a code $\mC \leq \F_{q^m}^{1\times n}$ is the minimum value $\rho$ such that the union of the balls of radius $\rho$ about each codeword, with respect to the distance function $d$, is equal to the full ambient space $\F_{q^m}^{1\times n}$. Hamming-metric (respectively rank-metric) covering radius is an invariant of Hamming-metric (respectively rank-metric) code equivalence.\\

We summarize some well-known results on the covering radius (c.f. \cite{byrne2017covering, coveringcodes}). Similar statements holds for any distance function, but we state them for the rank metric.

\begin{lemma} \label{rd}
  Let $\mC,\mD \leq \F_{q^m}^{1\times n}$ be a pair of rank-metric codes. The following hold.
  \begin{itemize}
\item[{\rm (a)}] If 
$\C \subseteq \mD$, then $\rho_{\rk}(\C) \ge \rho_{\rk}(\mD)$.
\item[{\rm (b)}] \label{p3} If 
$\C \subsetneq \mD$, then $\rho_{\rk}(\C) \ge d_{\rk}(\mD)$.
\item[{\rm (c)}] \label{p4} If $\C \notin \{\{0\},\; \F_{q^m}^{1\times n}\}$, then $d_{\rk}(\C)-1 < 2\rho_{\rk}(\C)$.
  \end{itemize}
 \end{lemma}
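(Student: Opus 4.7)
The plan is to handle parts (a) and (b) directly from the definitions, and then adapt the classical Hamming-metric argument to the rank setting for (c). For (a), I would observe that for every $x \in \F_{q^m}^{1\times n}$, enlarging the code from $\C$ to $\mD$ enlarges the set of candidate nearest codewords, so $\min_{y\in\mD} d_{\rk}(x,y) \leq \min_{c\in\C} d_{\rk}(x,c)$; taking the maximum over $x$ gives $\rho_{\rk}(\mD) \leq \rho_{\rk}(\C)$. For (b), I would fix $v \in \mD \setminus \C$, which exists because the inclusion is strict. Then for every $c \in \C \subseteq \mD$ the difference $v - c$ lies in $\mD \setminus \{0\}$, so $d_{\rk}(v,c) = \wt_{\rk}(v - c) \geq d_{\rk}(\mD)$; minimising over $c$ and using $\rho_{\rk}(\C) \geq \min_{c \in \C} d_{\rk}(v,c)$ gives the result.

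For (c), my plan is to mimic the classical Hamming-metric proof of $d \leq 2\rho + 1$, transported to the rank setting. Fix a nonzero $c \in \C$ with $\wt_{\rk}(c) = d := d_{\rk}(\C)$, and let $u_1, \ldots, u_d$ be an $\F_q$-basis of the $\F_q$-span of the entries of $c$, so that $c_j = \sum_{i=1}^d a_{ij} u_i$ with $a_{ij} \in \F_q$. I would then define a \emph{half-codeword} $x \in \F_{q^m}^{1\times n}$ by $x_j := \sum_{i=1}^{\lceil d/2 \rceil} a_{ij} u_i$. By construction $\wt_{\rk}(x) \leq \lceil d/2 \rceil$ and $\wt_{\rk}(c - x) \leq \lfloor d/2 \rfloor$, and the triangle inequality applied to $c = x + (c - x)$ forces both to be equalities. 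For every codeword $c' \in \C$, either $c' = 0$ and $d_{\rk}(x, 0) = \lceil d/2 \rceil$, or $c' \neq 0$ and $d_{\rk}(x, c') = \wt_{\rk}(c' - x) \geq \wt_{\rk}(c') - \wt_{\rk}(x) \geq d - \lceil d/2 \rceil = \lfloor d/2 \rfloor$. Hence $\rho_{\rk}(\C) \geq \lfloor d/2 \rfloor$, giving $2\rho_{\rk}(\C) \geq d - 1$.

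The main obstacle is the \emph{strict} inequality in (c) when $d$ is odd: the construction above only yields $2\rho_{\rk}(\C) \geq d - 1$ with possible equality. When $d$ is even, $2\lfloor d/2 \rfloor = d > d - 1$ gives strictness for free. For odd $d$, equality would force $\rho_{\rk}(\C) = \lfloor (d-1)/2 \rfloor$, i.e.\ a quasi-perfect $\F_{q^m}$-linear rank-metric code; to close this gap under the hypothesis $\C \notin \{\{0\}, \F_{q^m}^{1\times n}\}$, I would either refine the choice of $x$ by exploiting the $\F_{q^m}$-linearity of $\C$ (replacing $c$ by $\lambda c$ for a well-chosen $\lambda \in \F_{q^m} \setminus \F_q$ to enlarge the subspace available for the construction), or invoke the structural results on rank-metric covering codes recorded in \cite{byrne2017covering}.
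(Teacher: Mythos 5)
Parts (a) and (b) of your argument are correct and complete; the paper itself gives no proof of this lemma (it only cites \cite{byrne2017covering,coveringcodes}), and your arguments for (a) and (b) are the standard ones.

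Part (c) has a genuine gap, and it is exactly where you flag it. Your midpoint construction proves $\rho_{\rk}(\C)\ge\lfloor d/2\rfloor$, hence only $d\le 2\rho_{\rk}(\C)+1$; the statement asserts the strictly stronger $d\le 2\rho_{\rk}(\C)$. Two observations show this gap cannot be closed by tweaking the midpoint argument. First, your argument never uses the hypothesis $\C\ne\F_{q^m}^{1\times n}$, yet the conclusion fails for $\C=\F_{q^m}^{1\times n}$ (there $d=1$, $\rho_{\rk}=0$, and $d-1=0=2\rho_{\rk}$), so any complete proof must invoke that hypothesis somewhere. Second, the midpoint argument is metric-agnostic, and the analogous strict inequality is \emph{false} in the Hamming metric: perfect codes such as the $[7,4,3]_2$ Hamming code have $d=2\rho+1$. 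So the content of (c) is precisely a rank-metric phenomenon. The standard route (and the one behind the cited reference) is: if $d\ge 2\rho_{\rk}(\C)+1$, the balls of radius $\rho_{\rk}(\C)$ about codewords are pairwise disjoint and, by the definition of covering radius, also cover $\F_{q^m}^{1\times n}$; hence $\C$ would be a nontrivial perfect rank-metric code, and no such codes exist (Gadouleau--Yan / Loidreau). Your first proposed repair --- replacing $c$ by $\lambda c$ --- does not obviously help, since $\wt_{\rk}(\lambda c)=\wt_{\rk}(c)$ for all $\lambda\ne 0$ and the obstruction is exactly the possible existence of a perfect code, not the choice of midpoint; your second proposal (invoking \cite{byrne2017covering}) is the right one, but you would need to name the non-existence of perfect rank-metric codes as the specific ingredient and run the disjointness-plus-covering argument above.
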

 
 An $[n,k,d]_{q^m/q}$ code is called \emph{maximal} if it is not strictly contained in any (possibly non-linear) code $\mD \subseteq \F_{q^m}^{1\times n}$ such that $d_{\rk}(\mD)=d$. Clearly a cardinality-optimal code is also maximal.

\begin{lemma}[The Supercode Lemma, \cite{coveringcodes}] \label{max}
 Let $\mC$ be an $[n,k,d]_{q^m/q}$ code with 
 $|\mC| \ge 2$. Then $\mC$ is 
 maximal if and only if $\rho_{\rk}(\mC) \le d-1$.
\end{lemma}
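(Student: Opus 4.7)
The plan is to prove both directions by contraposition, using the fact that rank distance is translation-invariant.

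For the forward direction, suppose instead that $\rho_{\rk}(\mC) \geq d$. Then, by definition of the covering radius, there exists $x \in \F_{q^m}^{1 \times n}$ with $d_{\rk}(x,c) \geq d$ for every $c \in \mC$. In particular, since $d_{\rk}(x,x) = 0 < d$, the vector $x$ cannot lie in $\mC$. Consider the (possibly non-linear) code
\[
\mD := \mC \cup (x + \mC) \subseteq \F_{q^m}^{1 \times n},
\]
which strictly contains $\mC$. To see that $d_{\rk}(\mD) \geq d$, check the three types of distinct pairs: two elements of $\mC$ are at distance $\geq d$ because $d_{\rk}(\mC) = d$; two elements $x+c_1, x+c_2$ of $x + \mC$ are at distance $d_{\rk}(c_1, c_2) \geq d$ by translation invariance; and a mixed pair $(c_1, x+c_2)$ satisfies $d_{\rk}(c_1, x+c_2) = d_{\rk}(x, c_1 - c_2) \geq d$ since $c_1 - c_2 \in \mC$. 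This shows $\mC$ is not maximal.

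For the reverse direction, suppose $\mC$ is not maximal, so there exists a code $\mD \supsetneq \mC$ (not necessarily linear) with $d_{\rk}(\mD) = d$. Pick any $x \in \mD \setminus \mC$. For every $c \in \mC \subseteq \mD$ we have $d_{\rk}(x,c) \geq d$, so $\min_{c \in \mC} d_{\rk}(x,c) \geq d$, hence $\rho_{\rk}(\mC) \geq d$.

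The argument is essentially formal: the only mildly delicate point is verifying that the enlarged set $\mC \cup (x+\mC)$ has minimum rank distance at least $d$ across the three pair types, and this reduces entirely to the translation invariance of $d_{\rk}$ together with the defining property of $x$. The hypothesis $|\mC| \geq 2$ ensures the minimum distance of $\mC$ is defined so that the inequality $\rho_{\rk}(\mC) \leq d-1$ is meaningful, but is not otherwise used in the argument.
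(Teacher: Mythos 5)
Your proof is correct. The paper itself gives no proof of this lemma (it is quoted from the covering-codes literature), so there is nothing to compare against line by line; your argument is the standard supercode argument and both directions are sound. Two small remarks. First, the paper's definition of maximality requires the supercode $\mD$ to satisfy $d_{\rk}(\mD)=d$ exactly, whereas you only verify $d_{\rk}(\mD)\ge d$; the equality follows immediately because $\mC\subseteq\mD$ and, since $|\mC|\ge 2$, some pair of codewords of $\mC$ realises the distance $d$, so $d_{\rk}(\mD)\le d$. This is also where the hypothesis $|\mC|\ge 2$ does real work, slightly more than your closing remark suggests. Second, adjoining the whole coset $x+\mC$ is more than needed: $\mD:=\mC\cup\{x\}$ already strictly contains $\mC$ and has minimum distance $d$, which shortens the case analysis to a single mixed pair; your coset version is of course equally valid and only costs the extra translation-invariance check.
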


\begin{example}
   Let $\alpha=(\alpha_j: j \in [n])\in \F_{q^m}^{1\times n}$ have rank weight $n$ over $\Fq$ and let $i$ be a positive integer satisfying $(i,m)=1$.  
   An $[n,k,n-k+1]_{q^m/q}$ code with generator matrix 
   \[G_{n,k,i} = \left( \alpha_j^{q^{i(t-1)}} \right)_{t \in [k],j \in [n]}\] is called as a \emph{generalized Gabidulin code}.  
   We denote it by $\mG_{n,k,i}(\alpha)$. Its dual code is a generalized Gabidulin code $\mG_{n,n-k,i}(\beta)$, for some $\beta \in \F_{q^m}$.
   Such codes meet the rank-metric Singleton bound and are hence maximal, being optimal. 
   Therefore, from the Supercode Lemma, we have 
   $\rho_{\rk}(\mG_{n,k,i}(\alpha)) \leq n-k$. On the other hand, $\mG_{n,k,i}(\alpha)\lneq \mG_{n,k+1,i}(\alpha)$ and so from Lemma \ref{rd}, 
   we have $\rho_{\rk}(\mG_{n,k,i}(\alpha)) \geq d_{\rk}(\mG_{n,k+1,i}(\alpha))=n-k$. 
   It follows that $\rho_{\rk}(\mG_{n,k,i}(\alpha)) = n-k$.
\end{example}

We recall the Dual Distance and External Distance bounds for rank-metric codes \cite{delsarte1973four,byrne2017covering}, which we state for $\Fm$-linear rank-metric codes. The {\em external distance} of an
$\Fmk$ rank-metric code $\C$ is defined to be:
\[s_\rk(\C):=|\{ \wt_{\rk}(c): c\in \C, \,  c \neq 0\}|.\]

\begin{theorem}\label{thm:edb}
     Let $\C$ be a $\Fmk$ rank-metric code. Then the following hold:
     \begin{enumerate}
         \item 
           $\rho_{\rk}(\C^\perp)\leq s_\rk(\C)$ (external distance),
         \item 
           $\rho_{\rk}(\C^\perp)\leq \min\{n,m\}- d_{\rk}(\C)+1$ (dual distance).   
     \end{enumerate}  
\end{theorem}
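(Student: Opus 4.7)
The plan is to establish the external distance bound (1) via a classical Delsarte-style character-sum argument, and then to derive the dual distance bound (2) as an immediate corollary.

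For (1), let $s = s_\rk(\C)$ and let $W = \{w_1, \ldots, w_s\}$ be the set of nonzero rank weights attained on $\C$. Consider the polynomial
\[
P(X) := \prod_{i=1}^{s} (X - w_i),
\]
which has degree $s$, satisfies $P(0) = \prod_i (-w_i) \neq 0$, and vanishes on every nonzero rank weight of $\C$. Fix a nontrivial additive character $\chi = \psi \circ \mathrm{Tr}_{\F_{q^m}/\F_q}$ of $\F_{q^m}$. Because $P(\wt_\rk(c)) = 0$ for all nonzero $c \in \C$, one has
\[
\sum_{c \in \C} P(\wt_\rk(c)) \chi(c \cdot x) \;=\; P(0)
\]
for every $x \in \F_{q^m}^n$.

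Next, I would expand $P$ in the basis of $q$-Krawtchouk polynomials $K_0, K_1, \ldots, K_s$ associated with the rank-metric (bilinear forms) scheme on $\F_{q^m}^n \cong \F_q^{n \times m}$, writing $P(w) = \sum_{j=0}^s p_j K_j(w)$. Using the defining character identity $\sum_{u :\, \wt_\rk(u) = j} \chi(u \cdot z) = K_j(\wt_\rk(z))$, together with the orthogonality relation $\sum_{c \in \C} \chi(c \cdot z) = |\C| \cdot \mathbf{1}_{z \in \C^\perp}$, the displayed identity rearranges to
\[
P(0) \;=\; |\C| \sum_{j=0}^{s} p_j B_j,
\]
where $B_j := |\{y \in x + \C^\perp : \wt_\rk(y) = j\}|$ is the coset weight distribution of $x + \C^\perp$. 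Since the left-hand side is nonzero, at least one of $B_0, \ldots, B_s$ is positive, showing that the coset $x + \C^\perp$ contains a vector of rank weight at most $s$. As $x$ was arbitrary, $\rho_\rk(\C^\perp) \leq s$.

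Bound (2) then follows as a corollary: every nonzero rank weight of $\C$ lies in the set $\{d_\rk(\C), d_\rk(\C) + 1, \ldots, \min\{n,m\}\}$, since rank weights on $\F_{q^m}^n$ are bounded above by $\min\{n,m\}$. Hence $s_\rk(\C) \leq \min\{n,m\} - d_\rk(\C) + 1$, which combined with (1) yields (2). The main technical obstacle is justifying the $q$-Krawtchouk character identity in the $\F_{q^m}$-linear setting, since the bilinear forms scheme is naturally defined on $\F_q$-linear subspaces of $\F_q^{n \times m}$. Fortunately, for $\F_{q^m}$-linear $\C$ the $\F_{q^m}$-dual $\C^\perp$ coincides with the $\F_q$-trace-dual (by nondegeneracy of the trace pairing), so the $\F_q$-additive Poisson summation that underlies $\sum_{c \in \C} \chi(c \cdot z) = |\C| \cdot \mathbf{1}_{z \in \C^\perp}$ goes through, and the scheme-level argument transfers to the present setting without modification.
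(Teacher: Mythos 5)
The paper does not prove this theorem: it is recalled from Delsarte's ``four fundamental parameters'' paper and from Byrne--Ravagnani, where it is established by essentially the argument you outline --- the external distance bound via an annihilator polynomial expanded in the eigenvalues of the bilinear forms association scheme, and the dual distance bound as the corollary $s_{\rk}(\C)\leq \min\{n,m\}-d_{\rk}(\C)+1$, since all nonzero rank weights lie in $\{d_{\rk}(\C),\dots,\min\{n,m\}\}$. Your derivation of (2) from (1) is correct and is exactly how the cited sources obtain it, and your remarks on why the scheme-level argument applies to $\F_{q^m}$-linear codes (the $\F_{q^m}$-dual coincides with the $\F_q$-trace-dual, and the trace pairing corresponds to a nondegenerate bilinear form on $\F_q^{n\times m}$, hence to a rank-preserving identification) are the right ones.

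One technical step in your proof of (1) needs repair as written. For the bilinear forms scheme the eigenvalues $K_j$ (the generalized, or affine, $q$-Krawtchouk polynomials) are polynomials of degree $j$ in the variable $q^{-w}$, not in $w$ itself; consequently the span of $K_0,\dots,K_s$ as functions on $\{0,1,\dots,\min\{n,m\}\}$ is the span of $1,q^{-w},\dots,q^{-sw}$, and your annihilator $P(X)=\prod_{i=1}^{s}(X-w_i)$ need not lie in that span, so the expansion $P=\sum_{j=0}^{s}p_jK_j$ is not available in general. The fix is standard: take instead $P(X)=\prod_{i=1}^{s}\bigl(q^{-X}-q^{-w_i}\bigr)$, which still vanishes on every nonzero weight of $\C$, satisfies $P(0)=\prod_{i=1}^{s}(1-q^{-w_i})\neq 0$ because each $w_i\geq 1$, and does lie in the span of $K_0,\dots,K_s$. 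With that substitution the Poisson-summation computation goes through verbatim and yields $P(0)=|\C|\sum_{j=0}^{s}p_jB_j$, forcing some $B_j>0$ with $j\leq s$, as you claim. So the proposal is correct in structure and coincides with the proofs in the paper's references, modulo this one adjustment.
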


\subsection{\emph{q}-Systems and linear sets}\label{subsection:qsystems}

There is a classical way to associate a set of points in $\mathcal{P}\subseteq \PG(k-1, q^m)$ to a projective code in the Hamming metric. The idea is simply to take representatives in $\F_{q^m}^k$ of the points of $\mathcal{P}$ and put them as columns of a $k\times |\mathcal{P}|$ generator matrix $G$ over $\F_{q^m}$ of a code. As in the rank-metric case, such codes depend on the ordering of the points and on their chosen vector representatives, but different choices yield equivalent codes. 
We will call any code in this equivalence class a projective code \emph{associated} with $\mP$ and we will denote it by $\C_\mP$. The same can be done for multisets of points, in which case we arrive at non-projective codes, but we will not consider these in this work. This geometric vision of codes leads to many interesting connections between objects in finite geometry and properties of linear codes. In particular, the Hamming metric can be read from this set of points: for any $u\in \F_{q^m}^{k}$, we have that: 
\begin{equation}\label{eq:hm}
\wt_{H}(u^TG)=n-|\mP\cap\langle u\rangle_{\F_{q^m}} ^\perp|
\end{equation}

In the rank metric, there is analogous interpretation, which associates $q$-systems to codes. We will now introduce these objects. 

\begin{definition}
An \emph{$[n,k]_{q^m/q}$  system} is an $n$-dimensional $\Fq$-space $\mU\leq \Fm^k$ such that $\langle \mU\rangle_{\Fm}=\Fm^k$. A \emph{generator matrix} for $\mU$ is a $k\times n$ matrix over $\F_{q^m}$ whose columns form an $\F_q$-basis for $\mU$. Two $[n,k]_{q^m/q}$ systems $\mU$ and $\mathcal{V}$ are called \emph{equivalent} if there exists $\phi$ in ${\rm GL}_k(q^m)$ such that $\phi(\mU)=\mathcal{V}$.
\end{definition}

A standard way to obtain $[n,k]_{q^m/q}$  systems is to associate them with nondegenerate rank-metric codes. So, given a nondegenerate rank-metric code $\C$, we may associate it with a system $\mU$ by taking a generator matrix of $\C$ and defining $\mU$ to be the $\F_q$-span of its columns. This clearly depends on the choice of the matrix, but if we change the generator matrix we obtain an equivalent system. We will call any system $\mU$ in this equivalence class a system \emph{associated} with $\C$.
For a more detailed description of this correspondence, which involves also the rank metric, the reader is referred to \cite[\S 3]{alfarano2021linear} and \cite{randrianarisoa}. We just point out one important result which is the $q$-analogue of \eqref{eq:hm}: for any $u\in \F_{q^m}^{1\times k}$,
\[\wt_{\rk}(uG)=n-\dim_{\F_q}(\mU\cap\langle u\rangle_{\F_{q^m}}^\perp).\]
In this paper we will show new connections between rank-metric codes (viewed as covering codes) and $q$-systems.
In order to do so, we will use the definition of a linear set. Such objects were introduced by Lunardon in \cite{lunardon1999normal} in order to construct blocking sets and they are subject of intense research over the last years. An in-depth treatment of linear sets can be found in  \cite{polverino2010linear}. 

\begin{definition}
Let $\mU$ be an $\Fmk$ system. The $\F_q$-\emph{linear set} in $\PG(k-1, q^m)$ of rank $n$ associated with $\mU$ is the set $$L_\mU:=\{\langle u \rangle_{\F_{q^m}} :  u\in \mU\setminus\{0\}\},$$
where $\langle u \rangle_{\F_{q^m}}$ denotes the projective point corresponding to $u$.
\end{definition}

\begin{remark}
The original definition of a linear set does not assume that $\langle \mU\rangle_{\Fm}=\Fm^k$. However, if $\dim_{\Fm}(\langle \mU\rangle_{\Fm})=h<k$, then, up to equivalence, we may assume $\mU\leq \F_{q^m}^h$ with $\langle \mU\rangle_{\Fm}=\Fm^h$, and then study $L_\mU$ in $\PG(h-1,q^m)$. 
\end{remark}

Let $0\neq v\in \F_{q^m}^k$ and $P\in \PG(k-1,\Fm)$ be the projective point associated with $v$. We define the \emph{weight} of $P$ in $L_\mU$ as the integer
\[\wt_{\mU}(P):=\dim_{\Fq}(\mU\cap \langle v\rangle_{\F_{q^m}}).\]

\begin{definition}\label{def:scattered}
A linear set $L_{\mU}$ is \emph{scattered} if  $\wt_{\mU}(P)=1$ for each $P\in L_{\mU}$. 
\end{definition}

Any linear set $L_\mU$ of rank $n$ satisfies 
\begin{equation}\label{eq:linear_set} |L_\mU|\leq \frac{q^n-1}{q-1}. \end{equation}
Clearly a linear set $L_\mU$ is scattered if and only if equality holds in \eqref{eq:linear_set}.
Being a set of set of points in $\PG(k-1,q^m)$, a linear set $L_\mU$ can be associated with an $[|L_\mU|,k]_{q^m}$ code $\C_{L_\mU}$. We may hence associate a (projective) Hamming-metric code with a nondegenerate rank-metric code, as summarized in the following diagram.
\begin{equation}\label{eq:Hass}\begin{array}{rcl}
\C & \longrightarrow & \C_{L_\mU}\\
\downarrow & & \uparrow\\
\mU & \longrightarrow & L_\mU
\end{array}
\end{equation}

\begin{definition}
Let $\C$ be an $\Fmk$ rank-metric code. We call the code $\C_{L_\mU}$ obtained as in \eqref{eq:Hass} the \emph{projective Hamming-metric code associated with} $\C$ .
\end{definition}

\begin{remark}
We highlight the fact that the projective Hamming-metric code associated with a rank-metric code defined above is not, in general, the associated Hamming-metric code described in \cite[\S 4.2]{alfarano2021linear}. 
The two definitions coincide if and only if the underlying linear set is {\em scattered} (see for example \cite[\S 4.1]{alfarano2021linear}). If the linear set is scattered, the Hamming-metric code associated with a representative $\Fmk$ code has length $(q^n-1)/(q-1)$. Otherwise, it is shorter. 
We remark that Hamming-metric codes associated with scattered linear sets have been already considered in \cite{blokhuis2000scattered,napolitano2023codes,napolitano2023two,zini2021scattered}.
\end{remark}

\section{Rank-saturating systems}\label{sec:ranksat}

In this section we will introduce the main object of the paper. We will study its properties and relations with covering codes in the rank metric.

Let us start with the notion of a saturating set. 

\begin{definition}
Let $\mS \subseteq  \mathrm{PG}(k-1, q^m)$.
\begin{itemize}
    \item[{\rm (a)}]
      A point $Q \in \mathrm{PG}(k-1, q^m)$ is said to be $\rho$-{\em saturated} by $\mS$ if there exist $\rho+1$ points  $P_1,\ldots,P_{\rho+1}\in \mathcal{S}$ such that $Q\in \langle P_1,\ldots,P_{\rho+1}\rangle_{\F_{q^m}}$. We also say that $\mS$ $\rho$-{\em saturates} $Q$.
    \item[{\rm (b)}]
      The set $\mS$ is called $\rho$-\emph{saturating} set of $\mathrm{PG}(k-1, q^m)$ if every point $Q \in \mathrm{PG}(k-1, q^m)$ is $\rho$-saturated by $\mS$ and $\rho$ is the smallest value with this property.  
\end{itemize}
\end{definition}

 It is well-known (see, for example \cite[Theorem 11.1.2]{huffpless})
 that an $[n,n-k]_{q^m}$ code has (Hamming) covering radius $\rho$ if every element of $\F_{q^m}^k$ is a linear combination of $\rho$ columns of a generator matrix of the dual code and $\rho$ is the smallest value with such a  property. The correspondence between projective systems and linear codes leads to a correspondence between  $(\rho-1)$-saturating sets of size $n$ in $\mathrm{PG}(k-1,q^m)$ and the duals of  $[n,n-k]_{q^m}$ codes of covering radius $\rho$.
In defining the $q$-analogue of such saturating sets, we arrive at a $q$-analogue of this correspondence. 

\begin{definition}\label{def: saturating}
An $\Fmk$ system $\mU$ is \emph{rank-$\rho$-saturating} if $L_\mU$ is a $(\rho-1)$-saturating set in ${\rm PG}(k-1,q^m)$. We call such a linear set a $(\rho-1)$-\emph{saturating linear set}.
\end{definition}

The property of being rank-$\rho$-saturating is clearly invariant under equivalence of $q$-systems. 
The following result offers a characterization of rank-saturating systems which we will use extensively in the remainder of this paper.

\begin{theorem}\label{thm:characterization}
Let $\mU$ be an $\Fmk$ system and let $\{u_1,\ldots,u_n\}$ be an $\F_q$-basis of $\mU$. The following are equivalent:
\begin{itemize}
    \item[{\rm (a)}] $\mathcal{U}$ is rank-$\rho$-saturating.
    \item[{\rm (b)}] For each vector $v\in \F_{q^m}^k$ there exists $\lambda\in \F_{q^m}^{1\times n}$ with ${\rm wt}_{\rk}(\lambda)\le \rho$ such that $$v=\lambda_1 u_1+\ldots+\lambda_n u_n,$$ and $\rho$ is the smallest value with this property.
    \item[{\rm (c)}]
    The full space can be expressed as:
    \[\F_{q^m}^k=\bigcup_{\mathcal{S} :\:\mathcal{S} 
    \leq_{\F_q}\mathcal{U}:\:\dim_{\F_q}\mathcal{S}\leq \rho}
\langle \mathcal{S} \rangle_{\F_{q^m}},
\]
and $\rho$ is the smallest integer with this property.
\end{itemize}
\end{theorem}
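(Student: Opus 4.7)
The plan is to establish the chain (a) $\Leftrightarrow$ (b) $\Leftrightarrow$ (c) by unwinding each definition through the coordinate representation given by the fixed $\F_q$-basis $\{u_1,\ldots,u_n\}$ of $\mathcal{U}$. The pivotal observation is that because $u_i \in \mathcal{U}$ form an $\F_q$-basis, any element $w \in \mathcal{U}$ has a unique expansion $w = \sum_j a_j u_j$ with $a_j \in \F_q$; consequently, an $\F_{q^m}$-linear combination $v = \sum_{i=1}^{\rho} \mu_i w_i$ of elements $w_i \in \mathcal{U}$ rewrites as $v = \sum_j \lambda_j u_j$ where each $\lambda_j$ lies in the $\F_q$-span of $\mu_1,\ldots,\mu_\rho$, whence $\wt_{\rk}(\lambda)\leq \rho$. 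This single trick, applied in both directions, powers all three equivalences.

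For (a) $\Rightarrow$ (b), I would fix $v \in \F_{q^m}^k$, use the $(\rho-1)$-saturation of $\langle v\rangle_{\F_{q^m}}$ by $L_\mathcal{U}$ to produce nonzero $w_1,\ldots,w_\rho \in \mathcal{U}$ and $\mu_i \in \F_{q^m}$ with $v = \sum_i \mu_i w_i$, then expand each $w_i$ in the $u_j$-basis to deduce $\wt_{\rk}(\lambda)\leq \rho$. For (b) $\Rightarrow$ (a), given $\lambda$ with $r:=\wt_{\rk}(\lambda)\leq \rho$, I pick an $\F_q$-basis $\mu_1,\ldots,\mu_r$ of $\langle \lambda_j : j \in [n]\rangle_{\F_q}$, write $\lambda_j = \sum_i a_{ij}\mu_i$ with $a_{ij} \in \F_q$, and set $w_i := \sum_j a_{ij} u_j \in \mathcal{U}$. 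Then $v = \sum_i \mu_i w_i$, and after discarding zero $w_i$ we obtain at most $\rho$ points of $L_\mathcal{U}$ whose $\F_{q^m}$-span contains $\langle v\rangle_{\F_{q^m}}$.

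The equivalence (b) $\Leftrightarrow$ (c) is the same translation packaged differently: the set of $v = \sum_j \lambda_j u_j$ with $\wt_{\rk}(\lambda)\leq \rho$ coincides with $\bigcup_{\mathcal{S}} \langle \mathcal{S}\rangle_{\F_{q^m}}$ taken over $\F_q$-subspaces $\mathcal{S}\leq \mathcal{U}$ of dimension at most $\rho$. In the forward direction the $w_i$ built above span such an $\mathcal{S}$; in the reverse, I take an $\F_q$-basis $s_1,\ldots,s_t$ of $\mathcal{S}$ with $t \leq \rho$, expand each $s_i = \sum_j b_{ij} u_j$ with $b_{ij} \in \F_q$, and collect coefficients in $v = \sum_i \mu_i s_i$ to find $\lambda_j = \sum_i \mu_i b_{ij}$ lying in an $\F_q$-subspace of dimension at most $t$.

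The minimality of $\rho$ is handled uniformly at the end: if one of the three conditions were to hold with some $\rho' < \rho$ in place of $\rho$, the same three constructions would transport that witness to each of the other two formulations, contradicting the minimality built into those. I do not expect serious obstacles, as the argument is essentially a dictionary between $\F_q$- and $\F_{q^m}$-linear combinations. The main bookkeeping subtlety is to handle the case where some constructed $w_i$ vanishes, which only produces a \emph{shorter} representation and hence strengthens rather than weakens the conclusion.
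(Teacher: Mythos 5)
Your proposal is correct and follows essentially the same route as the paper: the coordinate dictionary between $\F_{q^m}$-combinations of at most $\rho$ elements of $\mathcal{U}$ and coefficient vectors $\lambda$ of rank weight at most $\rho$, applied in both directions, with minimality transported between formulations. If anything, your construction of the subspace $\mathcal{S}=\langle w_1,\ldots,w_r\rangle_{\F_q}\leq\mathcal{U}$ in the step toward (c) is slightly more careful than the paper's, which momentarily takes the span of the scalars $\lambda_1,\ldots,\lambda_n$ instead of a subspace of $\mathcal{U}$.
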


\begin{proof}
${\rm (a)}\Rightarrow {\rm (b)}$: Let $0\neq v\in \F_{q^m}^k$ and  $Q=\langle v\rangle_{\F_{q^m}} \in {\rm PG}(k-1,q^m)$. Since $\mU$ is rank-$\rho$-saturating, there exists $\rho$ points $P_1=\langle w_1\rangle_{\F_{q^m}},\ldots, P_\rho=\langle w_\rho\rangle_{\F_{q^m}}$ such that
$$v=\gamma_1w_1+\ldots \gamma_\rho w_\rho$$
with $\gamma_i\in \F_{q^m}$. Now $w_1,\ldots,w_\rho$ are in $L_\mU$, so that, if $u_1,\ldots, u_n$ is an $\F_q$-basis of $\mU$, we have
$$v=\gamma_1(\mu_{1,1}u_1+\ldots+\mu_{1,n}u_n)+\ldots+\gamma_\rho(\mu_{\rho,1}u_1+\ldots+\mu_{\rho,n}u_n),$$
with $\mu_{i,j}\in \F_q$ for all $i,j$. We reorder the terms to obtain:
$$v=\underbrace{(\gamma_1\mu_{1,1}+\ldots+\gamma_\rho\mu_{\rho,1})}_{\lambda_1}u_1+\ldots+\underbrace{(\gamma_1\mu_{1,n}+\ldots+\gamma_\rho\mu_{\rho,n})}_{\lambda_n}u_n.$$
Now, call $\gamma=(\gamma_1,\ldots,\gamma_\rho)\in\F_{q^m}^{1\times \rho}$, $M=(\mu_{i,j})\in \F_q^{\rho\times n}$, and $\lambda=\F_{q^m}^{1\times n}$. We have
$$\lambda=\gamma M,$$
and so $\wt_{\rk}(\lambda)\leq \rho$ (since the rank of $M$ is at most $\rho$). 

\medskip

\noindent ${\rm (b)}\Rightarrow {\rm (c)}$: From (b), any $v\in \F_{q^m}^k$ can be expressed as the linear combination:
\begin{equation}\label{eq:combi}v=\lambda_1u_1+\ldots+\lambda_nu_n
\end{equation}
with $\dim_{\F_q} \langle \lambda_1,\ldots,\lambda_n\rangle_{\F_{q}}\leq \rho.$ Let $\mS=\langle \lambda_1,\ldots,\lambda_n\rangle_{\F_{q}}$. By \eqref{eq:combi}, $v\in \langle \mS\rangle_{\F_{q^m}}$.

\medskip

\noindent ${\rm (c)}\Rightarrow {\rm (a)}$: Take $Q=\langle v\rangle_{\F_{q^m}}\in \PG(k-1,q^m)$. There exists $\mS$, an $\F_q$-subspace of $\mU$ with $\dim_{\F_q}\mS\leq \rho$, such that $v\in \langle\mS\rangle_{\F_{q^m}}$. Let $\{w_1,\ldots,w_\rho\}$ be a set containing a basis of $\mS$ over $\F_q$ and let $P_1,\ldots,P_\rho$ be their corresponding projective points, so that $\langle w_i \rangle_{\Fm}$. These clearly belong to $L_\mU$. Since $v\in \langle\mS\rangle_{\F_{q^m}}$, $Q\in \langle P_1,\ldots,P_\rho\rangle_{\F_{q^m}}$. 

\end{proof}

\begin{remark}
Note that {\rm (b)} does not depend on the choice of the $\F_q$-basis of $\mU$. Indeed, consider two $\F_q$-bases  $\mathcal{B}=\{u_1,\dots,u_n\}$ and $\mathcal{B}^\prime=\{u_1^\prime,\dots,u_n^\prime\}$ of $\mU$. Then, for each $i\in[n]$ we have that $u_i=\sum_{j=1}^n a_ju_j^\prime$ with $a_j\in\F_q$ for $j\in[n]$.  Therefore, 
\[
\sum_{i=1}^n \lambda_i u_i=\sum_{i=1}^n \lambda_i \left(\sum_{j=1}^n a_j u_j^\prime\right)=\sum_{j=1}^n\left(\sum_{i=1}^n \lambda_ia_j\right)u_j^\prime=\sum_{i=1}^n\lambda^\prime_iu_i^\prime, 
\]
which that implies $\wt_{\rk}(\lambda)=\wt_{\rk}(\lambda^\prime)$.
\end{remark}

The following theorem shows that in analogy with the Hamming-metric case, there is a correspondence between rank-saturating systems and rank-metric covering codes, thus encouraging 


\begin{theorem}\label{thm:covering}
Let $\mathcal{U}$ be an $\Fmk$ system associated with a code $\C$. The following are equivalent.
\begin{itemize}
    \item[{\rm (a)}] $\mU$ is rank-$\rho$-saturating.
    \item[{\rm (b)}] $\rho_{\rk}(\C^\perp)=\rho$.
\end{itemize}
\end{theorem}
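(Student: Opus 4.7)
The plan is to exploit Theorem~\ref{thm:characterization}(b) and reduce the statement to the standard characterisation of the covering radius via syndrome decoding, translated to the rank metric.

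First, I would fix notation and set up the syndrome picture. Let $\{u_1,\ldots,u_n\}$ be the $\F_q$-basis of $\mU$ coming from the columns of a generator matrix $G\in\F_{q^m}^{k\times n}$ of $\C$. Because $G$ generates $\C$, it is a parity check matrix for $\C^\perp$, and the syndrome map
\[
\sigma\colon \F_{q^m}^{1\times n}\longrightarrow \F_{q^m}^k,\qquad \sigma(x)=Gx^\top=\sum_{i=1}^n x_i u_i,
\]
is $\F_{q^m}$-linear with kernel $\C^\perp$. Since $\mU$ is an $\Fmk$ system, $\langle u_1,\ldots,u_n\rangle_{\F_{q^m}}=\F_{q^m}^k$, so $\sigma$ is surjective and its fibres are exactly the cosets of $\C^\perp$.

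Next I would compute $\rho_{\rk}(\C^\perp)$ in terms of $\sigma$. For any $x\in\F_{q^m}^{1\times n}$,
\[
\min_{c\in\C^\perp}d_{\rk}(x,c)=\min_{c\in\C^\perp}\wt_{\rk}(x-c)=\min\{\wt_{\rk}(y):\sigma(y)=\sigma(x)\},
\]
which depends only on the coset $\sigma(x)$. Taking the maximum over $x$ and re-indexing by $v=\sigma(x)\in\F_{q^m}^k$ yields
\[
\rho_{\rk}(\C^\perp)=\max_{v\in\F_{q^m}^k}\min\bigl\{\wt_{\rk}(\lambda):\lambda\in\F_{q^m}^{1\times n},\ \textstyle\sum_{i=1}^n\lambda_i u_i=v\bigr\}.
\]

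Finally I would match this to rank-saturation. The equality $\rho_{\rk}(\C^\perp)=\rho$ holds if and only if every $v\in\F_{q^m}^k$ admits some $\lambda\in\F_{q^m}^{1\times n}$ with $\wt_{\rk}(\lambda)\le\rho$ and $v=\sum_{i}\lambda_i u_i$, and $\rho$ is the smallest integer with this property. This is precisely condition (b) of Theorem~\ref{thm:characterization}, which is equivalent to condition (a), i.e.\ that $\mU$ is rank-$\rho$-saturating. This proves both implications simultaneously.

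There is no serious obstacle once the syndrome reformulation is in place; the only small points to be careful about are (i)~the surjectivity of $\sigma$, which uses that $\mU$ is a $q$-system (so $\langle\mU\rangle_{\F_{q^m}}=\F_{q^m}^k$ and $G$ has full row rank $k$), and (ii)~ensuring the \emph{minimality} of $\rho$ in the two definitions really lines up, which it does because both are phrased as the maximum, over all targets $v$, of the minimum rank weight of a representation of $v$ as an $\F_{q^m}$-linear combination of $u_1,\ldots,u_n$.
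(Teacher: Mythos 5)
Your proposal is correct and follows essentially the same route as the paper: both identify $G$ as a parity-check matrix for $\C^\perp$, use the syndrome map $x\mapsto Gx^\top$ (whose surjectivity follows from $\langle\mU\rangle_{\F_{q^m}}=\F_{q^m}^k$) to translate coset leaders of $\C^\perp$ into low-rank-weight representations $v=\sum_i\lambda_i u_i$, and then invoke condition (b) of Theorem~\ref{thm:characterization}. The only difference is cosmetic: you package the argument as a single chain of equalities for $\rho_{\rk}(\C^\perp)$, whereas the paper proves the two implications separately.
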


\begin{proof}
${\rm (a)}\Rightarrow {\rm (b)}$  Let $w\in \F_{q^m}^{1\times n}$, $G$ be a generator matrix for $\C$ and $v=Gw^T\in \F_{q^m}^k$. Since $\mU$ is rank-$\rho$-saturating, by condition {\rm (b)} of Theorem \ref{thm:characterization}, there exists $\lambda\in \F_{q^m}^{1\times n}$ with $\wt_{\rk}(\lambda)\leq \rho$ such that $v=G\lambda^T$. Then $G(w^T-\lambda^T)$, so that $w-\lambda\in \C^\perp$. Since $\rho$ is the least integer with this property, we may conclude that $\rho_{\rk}(\C^\perp)=\rho$.

\noindent ${\rm (b)}\Rightarrow {\rm (a)}$ Let $v\in \F_{q^m}^k$ and $G$ be a generator matrix for $\C$. Let $z$ any vector in $\F_{q^m}^{1\times n}$ such that $v=Gz^T$. By the definition of rank covering radius, there exists $w\in \C^\perp$ (i.e. satisfying $Gw^T=0$) such that $\wt_{\rk}(z-w)\leq \rho$. Call $\lambda=z-w$. We have $v=Gz^T=G(z^T-w^T)=G\lambda^T$. Since $\rho$ is the least integer with this property, we may conclude that $\mU$ is rank-$\rho$-saturating.
\end{proof}

\begin{corollary}
Let $\C$ be an $\Fmk$ rank-metric code and let $\mU$ be an $\Fmk$ system associated with $\C$. Then
$$\rho_{\rk}(\C^\perp)=\rho_H((\C_{L_\mU})^\perp),$$
where $\C_{L_\mU}$ is the projective Hamming-metric code associated with $\C$.
\end{corollary}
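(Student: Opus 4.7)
The plan is to chain together three equivalences, all of which are already present (either stated or cited) earlier in the excerpt, so that no genuinely new argument is required; the task is to verify that the indices line up and that the classical Hamming-metric correspondence is being applied to the correct objects.

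First, I would set $\rho := \rho_{\rk}(\C^\perp)$ and invoke Theorem \ref{thm:covering} in the direction (b)$\Rightarrow$(a) to conclude that $\mU$ is rank-$\rho$-saturating. By Definition \ref{def: saturating}, this is precisely the statement that the linear set $L_\mU \subseteq \PG(k-1,q^m)$ is a $(\rho-1)$-saturating set in the ordinary projective sense.

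Next, I would apply the classical Hamming-metric correspondence recalled in the paragraph immediately preceding Definition \ref{def: saturating}: for any set of points $\mathcal{P} \subseteq \PG(k-1,q^m)$, $\mathcal{P}$ is a $(\rho-1)$-saturating set if and only if the dual of the projective code $\C_\mathcal{P}$ associated with $\mathcal{P}$ has Hamming covering radius exactly $\rho$. Taking $\mathcal{P} = L_\mU$ and $\C_\mathcal{P} = \C_{L_\mU}$, this gives $\rho_H((\C_{L_\mU})^\perp) = \rho = \rho_{\rk}(\C^\perp)$, which is the desired identity. The reverse direction is identical: starting from $\rho_H((\C_{L_\mU})^\perp) = \rho$, the same three equivalences run in reverse and produce $\rho_{\rk}(\C^\perp) = \rho$.

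The only substantive point to be careful about is that the ``$\rho$'' appearing in each equivalence refers to the \emph{minimal} integer with the required covering/saturating property, so one has to observe that each of the three equivalences respects minimality (rather than merely giving inequalities in both directions). This is immediate from the way Theorem \ref{thm:characterization}(c) and Definition \ref{def: saturating} are phrased, and from the standard statement of the Hamming-metric covering radius in terms of columns of a parity check matrix. Consequently, no genuine obstacle arises; the proof is essentially a one-line chain of ``if and only if''s, and I would present it in that compact form rather than redoing the covering-radius argument from scratch.
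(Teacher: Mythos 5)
Your proposal is correct and follows essentially the same route as the paper's own one-line proof, which simply cites Theorem \ref{thm:covering} together with Definition \ref{def: saturating} (the classical correspondence between $(\rho-1)$-saturating sets in $\PG(k-1,q^m)$ and duals of codes with Hamming covering radius $\rho$ being taken as known). Your version merely spells out explicitly the chain of equivalences, including the minimality check, that the paper leaves implicit.
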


\begin{proof}
This follows immediately by Theorem \ref{thm:covering} and by the definition of rank-$\rho$-saturating system.
\end{proof} 

We close this section by reformulating some known results (see \cite{byrne2017covering}) on the rank-covering radius, in the language of saturating systems.

\begin{corollary}
Let $\mathcal{U}$ be a rank-$\rho$-saturating $\Fmk$ system associated with a code $\C$. Then
\[\rho\leq s_\rk(\C) \quad \text{ and } \quad \rho\leq \min\{m,n\}- d_{\rk}(\C)+1.\]
\end{corollary}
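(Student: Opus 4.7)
The plan is to recognize this as an immediate translation of the External Distance and Dual Distance bounds (Theorem \ref{thm:edb}) through the correspondence established in Theorem \ref{thm:covering}. Since $\mU$ is rank-$\rho$-saturating and associated with $\C$, Theorem \ref{thm:covering} gives the equality $\rho_{\rk}(\C^\perp) = \rho$. The entire task then reduces to rewriting the two parts of Theorem \ref{thm:edb} with $\rho_{\rk}(\C^\perp)$ replaced by $\rho$.

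In more detail, I would first apply part (1) of Theorem \ref{thm:edb} to $\C$, which yields $\rho_{\rk}(\C^\perp) \leq s_\rk(\C)$; combined with $\rho = \rho_{\rk}(\C^\perp)$ this is precisely the first inequality $\rho \leq s_\rk(\C)$. Then I would apply part (2) of the same theorem to obtain $\rho_{\rk}(\C^\perp) \leq \min\{n,m\} - d_\rk(\C) + 1$, which gives the second inequality by the same substitution.

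There is no real obstacle: the corollary is stated as a reformulation, and the content lies entirely in the dictionary between rank-saturating systems and rank-metric covering codes that Theorem \ref{thm:covering} provides. The only thing worth double-checking is the hypothesis of Theorem \ref{thm:edb}, namely that $\C$ is a nondegenerate $\Fmk$ rank-metric code, which is implicit here because $\C$ admits an associated $\Fmk$ system.
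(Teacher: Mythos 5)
Your proposal is correct and matches the paper's argument: the paper likewise derives both inequalities as direct consequences of Theorem \ref{thm:edb}, with the identification $\rho=\rho_{\rk}(\C^\perp)$ coming from Theorem \ref{thm:covering}. Your extra remark on nondegeneracy is a sensible sanity check but does not change the argument.
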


\begin{proof}
These are direct consequences of Theorem \ref{thm:edb}.
\end{proof}

\begin{corollary}\label{cor:gab}
     Let $\C$ be an $[n,k]_{q^m/q}$ generalized Gabidulin code and let $\mU$ be an $\Fmk$ system associated with $\C$. Then 
     $\mU$ is a rank-$k$-saturating system.
\end{corollary}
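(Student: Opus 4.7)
The plan is to deduce this immediately from Theorem \ref{thm:covering} together with the computation of the rank-covering radius of generalized Gabidulin codes carried out in the example of Section \ref{sec:back}.

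First I would recall that the dual $\mC^\perp$ of an $[n,k]_{q^m/q}$ generalized Gabidulin code $\mG_{n,k,i}(\alpha)$ is itself a generalized Gabidulin code, namely $\mG_{n,n-k,i}(\beta)$ for a suitable $\beta \in \F_{q^m}^{1\times n}$ of rank weight $n$. Then, applying the formula $\rho_{\rk}(\mG_{n,\ell,i}(\gamma)) = n - \ell$ established in the example (which combined the Singleton-type optimality of Gabidulin codes, the Supercode Lemma \ref{max}, and the chain $\mG_{n,\ell,i}(\gamma) \lneq \mG_{n,\ell+1,i}(\gamma)$ via Lemma \ref{rd}) with $\ell = n-k$, I would conclude that
\[
\rho_{\rk}(\mC^\perp) = \rho_{\rk}(\mG_{n,n-k,i}(\beta)) = n - (n-k) = k.
\]

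Finally, by Theorem \ref{thm:covering}, a system $\mU$ associated with $\mC$ is rank-$\rho$-saturating precisely when $\rho_{\rk}(\mC^\perp) = \rho$; taking $\rho = k$ gives the claim.

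There is no real obstacle here: the statement is essentially a translation, through the correspondence of Theorem \ref{thm:covering}, of a covering-radius fact for generalized Gabidulin codes that has already been established. The only point that deserves a line of justification is the fact that the dual of a generalized Gabidulin code is again a generalized Gabidulin code (for a different evaluation vector $\beta$), which is standard.
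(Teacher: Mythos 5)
Your proof is correct and follows exactly the route the paper intends: the paper's own one-line proof simply cites the fact that $\rho_{\rk}(\C^\perp)=k$, which is established in the example on generalized Gabidulin codes via the duality $\mG_{n,k,i}(\alpha)^\perp=\mG_{n,n-k,i}(\beta)$ and the Supercode Lemma, and then applies Theorem \ref{thm:covering}. You have merely made explicit the details the paper leaves implicit.
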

\begin{proof}
    The statement follows immediately from the fact that $\rho_{\rk}(\C^\perp) = k$. 
\end{proof}

\section{Bounds on the dimension of rank-saturating systems}\label{sec:bounds}

The classical covering problem, as presented for example in \cite{coveringcodes}, is as follows: given $n$ and $\rho$, estimate the least number of spheres of radius $\rho$ (with respect to the metric considered) such that every vector in the ambient vector space of dimension $n$ is contained in their union, i.e. such that the union of the balls of radius $\rho$ {\em covers} this $n$-dimensional space.
In the framework of linear codes, this is equivalent
to asking how large the rate of a code (that is the ratio between the dimension of the code and $n$) must be
in order to obtain a covering of the ambient space by balls centred at codewords.
In terms of rank-$\rho$-saturating systems, by Theorem \ref{thm:covering} one may ask to find the least value of $n$ such that an $[n,k]_{q^m/q}$ rank-$\rho$-saturating system exists, for fixed $k$ and $\rho$.

\begin{definition}
We denote by $s_{q^m/q}(k,\rho)$ the minimal $\F_q$-dimension of any rank-$\rho$-saturating system in $\F_{q^m}^k$. That is,
\[
s_{q^m/q}(k,\rho):= \min\{n :\exists \text{ an } [n,k]_{q^m/q}  \text{ rank-}\rho\text{-saturating system}\}.
\]
\end{definition}

The rest of this paper is devoted to obtaining bounds on this quantity: we will first give a lower bound and then provide upper bounds arising from explicit constructions of rank-$\rho$-saturating systems.
We will use the following result.

\begin{lemma}[\!\!{\cite[Corollary 2.3]{gadouleauphd}}]\label{lem:gadconst}
    Let $a,b$ be positive integers, with $b\leq a$. Then
    \[\left[\begin{array}{c}
	         a\\
	         b
	    \end{array}\right]_q <  \frac{q^{b(a-b)}}{(1/q)_{\infty}},
	\]
	where $\displaystyle (1/q)_{\infty}:=\prod_{i=1}^{\infty}(1-q^{-i}).$
\end{lemma}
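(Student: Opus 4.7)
The plan is to derive the bound by extracting the dominant power of $q$ from the Gaussian binomial and then controlling the residue by a convergent infinite product. The starting identity is
\[\qbin{a}{b}{q} \;=\; \prod_{j=0}^{b-1}\frac{q^a - q^j}{q^b - q^j} \;=\; \prod_{j=0}^{b-1}\frac{q^{a-j}-1}{q^{b-j}-1},\]
obtained by cancelling $q^j$ from numerator and denominator. After reindexing $i = b-j$, and factoring out $q^{a-b+i}$ from each numerator and $q^i$ from each denominator, the powers of $q$ collect to $q^{b(a-b)}$ and I would arrive at
\[\qbin{a}{b}{q} \;=\; q^{b(a-b)}\prod_{i=1}^{b}\frac{1 - q^{-(a-b+i)}}{1 - q^{-i}}.\]

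Next I would estimate the residual product. Since $a \ge b$ and $i \ge 1$, each exponent $-(a-b+i)$ is strictly negative, so $0 < 1 - q^{-(a-b+i)} < 1$, which gives the strict bound
\[\qbin{a}{b}{q} \;<\; q^{b(a-b)}\prod_{i=1}^{b}\frac{1}{1 - q^{-i}}.\]
The finite product on the right is majorised by the infinite product $\prod_{i=1}^{\infty}(1-q^{-i})^{-1} = 1/(1/q)_{\infty}$, because each additional factor is greater than $1$; the infinite product converges since $\sum_{i\ge 1} q^{-i}$ does. Combining the two inequalities yields the stated upper bound.

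I do not anticipate any serious obstacle: the argument is an elementary algebraic rearrangement followed by a single termwise bound. The only care needed is the bookkeeping of the index shift that produces the exponent $b(a-b)$, and the observation that the inequality is \emph{strict} because $1 - q^{-(a-b+i)} < 1$ already for $i=1$, so no separate case analysis is required.
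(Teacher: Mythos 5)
Your proof is correct and complete: the factorization $\qbin{a}{b}{q}=q^{b(a-b)}\prod_{i=1}^{b}\frac{1-q^{-(a-b+i)}}{1-q^{-i}}$, the termwise bound on the numerators, and the majorization by the infinite product are all valid, and the strictness is justified. The paper itself gives no proof of this lemma (it is quoted from \cite[Corollary 2.3]{gadouleauphd}), and your argument is precisely the standard one used there.
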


The following has been obtained with a slightly different approach in \cite[Proposition 14]{gadouleau2008packing}.

\begin{theorem}
\label{thm_LowerBound}
		Let $\mathcal{U}$ be a rank-$\rho$-saturating $\Fmk$ system.
		Then
	\[
	    \left[\begin{array}{c}
	         n\\
	         \rho
	    \end{array} \right]_{q}\geq q^{m(k-\rho)}.
	\]
In particular, we have the following:
\begin{equation}\label{eq:fullineq}
n \geq \begin{dcases}
           \left\lceil \frac{mk}{\rho}\right\rceil - m + \rho & \text{ if } q>2,\\
           \left\lceil \frac{mk-1}{\rho}\right\rceil - m + \rho & \text{ if } q=2, \rho> 1,\\
           m(k-1)+1 & \text{ if } q=2,\rho =1.
\end{dcases}
\end{equation}
\end{theorem}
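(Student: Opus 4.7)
The plan is to turn the characterization in Theorem \ref{thm:characterization}(c) into a volume-counting inequality. Since $\mathcal{U}$ is rank-$\rho$-saturating and $n \geq k \geq \rho$, every vector of $\F_{q^m}^k$ lies in the $\F_{q^m}$-span of some $\rho$-dimensional $\F_q$-subspace of $\mathcal{U}$, giving the covering
\[
\F_{q^m}^k \;=\; \bigcup_{\substack{\mathcal{S}\leq_{\F_q}\mathcal{U}\\ \dim_{\F_q}\mathcal{S}=\rho}}\langle \mathcal{S}\rangle_{\F_{q^m}}.
\]
A union bound over the $\qbin{n}{\rho}{q}$ subspaces $\mathcal{S}$, each contributing at most $q^{m\rho}-1$ nonzero vectors to the right-hand side, yields $q^{mk}-1 \leq \qbin{n}{\rho}{q}(q^{m\rho}-1)$; since $k \geq \rho$ one checks $\frac{q^{mk}-1}{q^{m\rho}-1} \geq q^{m(k-\rho)}$, which is the first displayed inequality.

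To extract the bounds \eqref{eq:fullineq}, I would combine the above with Lemma \ref{lem:gadconst}:
\[
q^{m(k-\rho)} \;\leq\; \qbin{n}{\rho}{q} \;<\; \frac{q^{\rho(n-\rho)}}{(1/q)_{\infty}}.
\]
Taking $\log_q$ and rearranging gives $n > \rho + \frac{mk}{\rho} - m + \frac{\log_q((1/q)_{\infty})}{\rho}$. The three cases in \eqref{eq:fullineq} then come from bounding $(1/q)_{\infty}$ appropriately. For $q \geq 3$, Weierstrass's product inequality gives $(1/q)_{\infty} \geq (q-2)/(q-1) \geq 1/q$, hence $\log_q((1/q)_{\infty}) \geq -1$; clearing denominators and using the integrality of $\rho(n+m-\rho)$ upgrades the strict real-valued inequality to $n \geq \lceil mk/\rho\rceil - m + \rho$. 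For $q = 2$ and $\rho \geq 2$, only the cruder estimate $(1/2)_{\infty} > 1/4$ is available, so $\log_2((1/2)_{\infty}) > -2$, and the same integer-rounding step loses one unit, yielding $n \geq \lceil (mk-1)/\rho\rceil - m + \rho$.

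The edge case $q = 2$, $\rho = 1$ is cleanest handled directly: substituting $\qbin{n}{1}{q} = (q^n-1)/(q-1)$ into the Gaussian-binomial inequality gives $q^n \geq (q-1)q^{m(k-1)}+1 > q^{m(k-1)}$, forcing $n \geq m(k-1)+1$ (this also recovers the $\rho = 1$ instance of the $q \geq 3$ bound). The main obstacle is not the combinatorial step, which is immediate, but calibrating the slack in Lemma \ref{lem:gadconst}: one must identify the precise $q$-dependent threshold at which the strict real inequality becomes an integer inequality, with or without a unit of loss. This is exactly what forces the three-way split in \eqref{eq:fullineq} and is where the argument is most delicate.
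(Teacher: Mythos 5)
Your proposal is correct and follows essentially the same route as the paper: the same covering/counting argument over the $\rho$-dimensional $\F_q$-subspaces of $\mathcal{U}$ (via Theorem \ref{thm:characterization}), combined with Lemma \ref{lem:gadconst} and the numerical facts $(1/q)_{\infty}>1/q$ for $q>2$ and $(1/2)_{\infty}>1/4$, with the $q=2,\rho=1$ case handled directly from $\qbin{n}{1}{q}\geq q^{m(k-1)}$. The only (harmless) differences are your marginally sharper union bound $q^{mk}-1\leq\qbin{n}{\rho}{q}(q^{m\rho}-1)$ and your explicit Weierstrass-product justification of the estimate on $(1/q)_{\infty}$, which the paper merely asserts.
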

\begin{proof}
	Let us consider the set $\Pi_{\rho}$ of all $\F_{q^m}$-subspaces spanned by $\rho$ $\F_q$-linearly independent elements of $\mU$; since the $\F_q$-dimension of these subspaces is $\rho$, the rank of the $\Fq$-span of their coefficients is at most $\rho$.
	As $\mathcal{U}$ saturates $\F_{q^m}^k$, from Theorem \ref{thm:covering}, we know that $\Pi_{\rho}$ must cover the latter, i.e. that $\F_{q^m}^k = \bigcup_{V \in \Pi_{\rho}} V$. Therefore,
\begin{equation}\label{eq:ineq}
 	    \left[\begin{array}{c}
	         n\\
	         \rho
	    \end{array} \right]_{q}\cdot q^{m\rho}\geq q^{mk}.
\end{equation}
\noindent If $q=2$ and $\rho=1$, then from \eqref{eq:ineq} we get that $2^n-1\geq 2^{m(k-1)}$ and hence $n\geq m(k-1)+1$.
 From Lemma \ref{lem:gadconst},
\[
    \left[\begin{array}{c}
	         a\\
	         b
	    \end{array}\right]_q < (1/q)_{\infty}^{-1}\cdot q^{b(a-b)}, \mbox{for } a,b \in \mathbb N.\\
\]	
So
\[
	(1/q)_{\infty}^{-1}\cdot q^{\rho(n-\rho)}\cdot q^{m\rho}>q^{mk}.
\]
Hence
\[n \geq  \left\lceil\frac{m}{\rho}(k-\rho) + \rho - \frac{\lfloor \log_q((1/q)_\infty^{-1}) \rfloor}{\rho}\right\rceil= \left\lceil \frac{mk-\lfloor \log_q((1/q)_\infty^{-1}) \rfloor}{\rho}\right\rceil -m+\rho.\]
The result now follows since $(1/q)_{\infty}^{-1}< q$ for all $q>2$, and is strictly less than $4$ for $q=2$.
\end{proof}

By Theorem \ref{thm_LowerBound}, we obtain an immediate lower bound:
\begin{align}\label{eq:lb}
s_{q^m/q}(k,\rho) \geq \begin{dcases}
           \left\lceil \frac{mk}{\rho}\right\rceil - m + \rho & \text{ if } q>2,\\
           \left\lceil \frac{mk-1}{\rho}\right\rceil - m + \rho & \text{ if } q=2, \rho > 1,\\
           m(k-1)+1 & \text{ if } q=2,\rho =1.
       \end{dcases}
\end{align}
Note that in the case $\rho=1$, the bound of (\ref{eq:lb}) is attained, i.e.,
$s_{q^m/q}(k,1)=m(k-1)+1$.\\
To see this, let $v\in \F_{q^m}^k$, $v\neq 0$ and let $v'\notin \langle v\rangle_{\F_{q^m}}^\perp $. Consider the $[m(k-1)+1,k]_{q^m/q}$ system:
\[\mU=\langle v'\rangle_{\F_q}+\langle v\rangle_{\F_{q^m}}^\perp,\]
which is clearly a rank-$1$-saturating system, because $L_\mU=\PG(k-1,q^m)$. Let $\C$ be the code whose generator matrix has the elements of an $\F_q$-basis of the system $\mU$ as its columns. The dual code $C^\perp$ is an $[m(k-1)+1,m(k-1)+1-k]_{q^m/q}$ with rank covering radius $1$ and it is the shortest code with this property for this dimension and $m$.

We now obtain upper bounds on $s_{q^m/q}(k,\rho)$. 
To start with, we give a generalization of the previous construction. 

\begin{theorem}\label{thm:generalUB}
Any $[m(k-\rho)+\rho,k]_{q^m/q}$ system $\mU$ with generator matrix
\[
G:=\left[ \begin{array}{c|c}
I_{\rho} & \mathbf{0}   \\
\hline
\mathbf{0} & G' \end{array}\right],
\]
is rank-$\rho$-saturating. In particular, 
\[s_{q^m/q}(k,\rho)\leq m(k-\rho)+\rho.\]
\end{theorem}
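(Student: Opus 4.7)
The plan is to verify condition (b) of Theorem \ref{thm:characterization} directly: given any $v \in \F_{q^m}^k$, I must produce $\lambda \in \F_{q^m}^{1 \times n}$ with $G\lambda^T = v$ and $\wt_\rk(\lambda) \leq \rho$, and then argue that $\rho$ is the least value for which this is possible. Note first that the hypothesis that $\mU$ is an $[m(k-\rho)+\rho,k]_{q^m/q}$ system with this block generator matrix forces the columns $g'_1,\ldots,g'_{m(k-\rho)}$ of $G'$ to form an $\F_q$-basis of $\F_{q^m}^{k-\rho}$ (by a dimension count on both the $\F_q$-span and the $\F_{q^m}$-span).

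For the upper bound I decompose $v = (v_1, v_2)$ and $\lambda = (\lambda_1, \lambda_2)$ along the block structure; the equation $G\lambda^T = v$ splits as $\lambda_1 = v_1$ and $G'\lambda_2^T = v_2$. Setting $V := \langle v_{1,1}, \ldots, v_{1,\rho}\rangle_{\F_q}$, my goal becomes to choose $\lambda_2$ with every entry in $V$, so that $\wt_\rk(\lambda) \leq \dim_{\F_q} V \leq \rho$. If $v_1 = 0$, I take $\lambda_2 \in \F_q^{m(k-\rho)}$ solving $G'\lambda_2^T = v_2$, which exists because the $g'_j$ form an $\F_q$-basis, and this gives rank weight at most $1$. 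If $v_1 \neq 0$, I fix a nonzero $v_0 \in V$, expand $v_0^{-1} v_2 = \sum_j c_j g'_j$ with $c_j \in \F_q$, and set $\lambda_{2,j} := v_0 c_j$. Each $\lambda_{2,j}$ lies in $V$ by the $\F_q$-stability of $V$, and $G'\lambda_2^T = v_0 \sum_j c_j g'_j = v_2$, as required.

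For the minimality of $\rho$, using that $m \geq \rho$ I can choose $v_1 \in \F_{q^m}^\rho$ whose entries are $\F_q$-linearly independent and consider $v = (v_1, 0)$. Any $\lambda$ with $G\lambda^T = v$ satisfies $\lambda_1 = v_1$, so $\wt_\rk(\lambda) \geq \dim_{\F_q}\langle v_{1,1}, \ldots, v_{1,\rho}\rangle_{\F_q} = \rho$. Combined with the construction above, this shows that $\rho$ is exactly the least value that works, so $\mU$ is rank-$\rho$-saturating, and the bound $s_{q^m/q}(k,\rho) \leq m(k-\rho)+\rho$ follows immediately. The main subtlety is the covering step: a naive choice of $\lambda_2 \in \F_q^{m(k-\rho)}$ introduces the extra independent direction $\F_q$ outside $V$, potentially inflating the rank weight to $\rho + 1$. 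The key trick is to exploit that $V$, being a nonzero $\F_q$-subspace of the field $\F_{q^m}$, contains a unit and is closed under $\F_q$-multiplication, which lets one reabsorb the combinatorial coefficients back into $V$.
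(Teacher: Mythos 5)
Your proof is correct and follows essentially the same route as the paper's: both split $v$ along the block structure, use that the lower block of $\mU$ is $\F_q$-spanning in $\F_{q^m}^{k-\rho}$, and absorb a nonzero upper coordinate of $v$ into the lower part so that the rank weight stays at most $\rho$. If anything, your treatment is slightly more careful at two points: you obtain the $\F_q$-basis property of the columns of $G'$ by a direct dimension count rather than by passing to a normal form of the $q$-system, and your minimality witness (a $v_1$ with $\F_q$-linearly \emph{independent} entries) is the correct strengthening of the paper's ``first $\rho$ coordinates non-zero.''
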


\begin{proof}
Let $\F_{q^m}=\F_q[\alpha]$. By \cite[Proposition 3.16.]{alfarano2021linear}, we have that, up to equivalence, the system $\mU$ has generator matrix
\[G:=\left[ \begin{array}{c|c|c|c|c}
I_{\rho} & \mathbf{0}   & \mathbf{0}     & \cdots     & \mathbf{0}\\
\hline
\mathbf{0} & I_{k-\rho} & \alpha I_{k-\rho} & \cdots & \alpha^{m-1}I_{k-\rho} \end{array}\right],\]
while $\mU$ itself is given by:
\[\mathcal{U}=\left\{\left(\begin{array}{c}u\\\hline \omega \end{array}\right) : u\in\F_{q}^\rho,\omega\in\F_{q^m}^{k-\rho}\right\}.\]
Let $v\in \F_{q^m}^k$ and suppose that $v_{i_1},\ldots, v_{i_r}\neq 0$, for some $i_j\in [\rho]$. Then $v$ can be expressed as:
\[v=v_{i_1}\left(\begin{array}{c}e_{i_1}\\\hline \frac{\omega_1}{v_{i_1}} \\\vdots\\ \frac{\omega_{k-\rho}}{v_{i_1}} \end{array}\right)+v_{i_2}\left(\begin{array}{c}e_{i_2}\\\hline \frac{\omega_1}{v_{i_2}} \\\vdots\\ \frac{\omega_{k-\rho}}{v_{i_2}} \end{array}\right)+\ldots+v_{i_r}\left(\begin{array}{c}e_{i_r}\\\hline \frac{\omega_1}{v_{i_r}} \\\vdots\\ \frac{\omega_{k-\rho}}{v_{i_r}} \end{array}\right),\]
where $e_1,\ldots,e_\rho$ is the standard basis of $\F_{q}^\rho$.
Clearly, each of these $r\leq\rho$ vectors belongs to $\mU$. 
Any vector whose first $\rho$ coordinates are non-zero requires exactly $\rho$ vectors in $\mU$ and hence the system is rank-$\rho$-saturating.
\end{proof}

Since we have equality between the lower and the upper bound for $\rho=1$ and for $\rho=k$, the bound of (\ref{eq:lb}) is attained in these cases. 

We now study some properties of the function $s_{q^m/q}(k,\rho)$.

\begin{lemma}\label{lemma:ineq}
Let $\mU$ be a rank-$\rho$-saturating $[n,k]_{q^m/q}$ system.
The following are equivalent.
\begin{enumerate}
    \item 
      $L_\mU$ is not scattered.
    \item
      $\mU$ has an $\Fq$-basis $\{u_1,\ldots,u_n\}\subseteq \F_{q^m}^k$ with the property that
\[
u_n = \lambda \sum_{j=1}^{n-1}l_{\rho+1,j} u_j,
\] 
for some $l_{\rho+1,j}\in \Fq,1\leq j\leq n-1$ and $\lambda \in \F_{q^m}\setminus \F_q$.
\end{enumerate}
If either of the above equivalent properties hold, then
$\mU$ contains a rank-$\rho'$-saturating \\$[n-1,k]_{q^m/q}$ system satisfying $\rho'\leq \rho+1$.
In particular, one such system is given by $\langle u_1,\ldots,u_{n-1}\rangle_{\Fq}$.
\end{lemma}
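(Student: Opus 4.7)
The approach is to translate the non-scattered condition on $L_\mU$ directly into an $\F_{q^m}$-dependency among elements of an $\F_q$-basis of $\mU$, and then to use characterization (b) of Theorem~\ref{thm:characterization} to quantify how removing one basis vector affects the rank-weight of coefficient expansions.

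For (1)$\Leftrightarrow$(2), suppose first that $L_\mU$ is not scattered. By Definition~\ref{def:scattered}, some projective point $\langle v\rangle_{\F_{q^m}}$ satisfies $\wt_\mU(\langle v\rangle_{\F_{q^m}}) \geq 2$, so $\mU \cap \langle v\rangle_{\F_{q^m}}$ contains two $\F_q$-linearly independent vectors $u$ and $w$. They are $\F_{q^m}$-proportional, and writing $w = \lambda u$ forces $\lambda \in \F_{q^m}\setminus \F_q$. Extending $\{u,w\}$ to an $\F_q$-basis of $\mU$ with $u_1 := u$ and $u_n := w$ yields the desired relation $u_n = \lambda u_1$, i.e.\ the form in (2) with $l_{\rho+1,1}=1$ and $l_{\rho+1,j}=0$ otherwise. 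Conversely, given the basis described in (2), set $u' := \sum_{j=1}^{n-1} l_{\rho+1,j} u_j \in \mU$. From $u_n = \lambda u' \neq 0$ we get $u' \neq 0$, and since $\lambda \notin \F_q$ the elements $u'$ and $u_n$ are $\F_q$-linearly independent vectors of $\mU$ both lying in $\langle u'\rangle_{\F_{q^m}}$. Hence $\wt_\mU(\langle u'\rangle_{\F_{q^m}}) \geq 2$ and $L_\mU$ is not scattered.

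For the conclusion, let $\mU' := \langle u_1,\ldots, u_{n-1}\rangle_{\F_q}$; this has $\F_q$-dimension $n-1$ since $u_1,\ldots,u_{n-1}$ are $\F_q$-linearly independent, and it is an $[n-1,k]_{q^m/q}$ system because $u_n$ already lies in the $\F_{q^m}$-span of $u_1,\ldots, u_{n-1}$. Fix $v \in \F_{q^m}^k$. By Theorem~\ref{thm:characterization}(b) applied to $\mU$ there exist $\alpha_1,\ldots,\alpha_n \in \F_{q^m}$ with $\dim_{\F_q}\langle \alpha_1,\ldots,\alpha_n\rangle_{\F_q} \leq \rho$ and $v = \sum_{i=1}^n \alpha_i u_i$. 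Substituting $u_n = \lambda \sum_{j=1}^{n-1} l_{\rho+1,j} u_j$ gives
\[
v = \sum_{j=1}^{n-1}\bigl(\alpha_j + \lambda\alpha_n l_{\rho+1,j}\bigr)\, u_j.
\]
The key observation, and the only delicate step in the argument, is that every new coefficient $\mu_j := \alpha_j + \lambda\alpha_n l_{\rho+1,j}$ lies in the $\F_q$-space $\langle \alpha_1,\ldots,\alpha_n\rangle_{\F_q} + \langle \lambda\alpha_n\rangle_{\F_q}$, of $\F_q$-dimension at most $\rho+1$, precisely because the scalars $l_{\rho+1,j}$ are forced to lie in $\F_q$. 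Hence $\wt_{\rk}(\mu_1,\ldots,\mu_{n-1}) \leq \rho+1$, and applying Theorem~\ref{thm:characterization}(b) in the reverse direction to $\mU'$ yields a rank-saturating parameter $\rho' \leq \rho+1$.
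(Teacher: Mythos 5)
Your proof is correct and follows essentially the same route as the paper's: the core step in both is to substitute the relation $u_n=\lambda\sum_j l_{\rho+1,j}u_j$ into a rank-weight-$\le\rho$ expansion of an arbitrary $v$ and observe that the resulting coefficients span an $\F_q$-space of dimension at most $\rho+1$. The only difference is that you spell out the equivalence $(1)\Leftrightarrow(2)$ via the weight of a point, which the paper dismisses as clear.
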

\begin{proof}
The equivalence of the two statements given above is clear: $L_\mU$ is scattered if and only no two members of $\mU$ are $\F_{q^m}$-multiples of the same vector in $\F_{q^m}^k$.
For any vector $v\in \F_{q^m}^k$, 
\[v=\sum_{i=1}^\rho\lambda_i\sum_{j=1}^nl_{i,j}u_j\]
for some $\lambda_i\in \F_{q^m}$ and $l_{i,j}\in \F_q$. Therefore,
\begin{align*}
  v & = \sum_{i=1}^\rho\lambda_i\sum_{j=1}^{n-1}l_{i,j}u_j+\sum_{i=1}^\rho\lambda_il_{i,n}u_n\\
  & = \sum_{i=1}^\rho\lambda_i\sum_{j=1}^{n-1}l_{i,j}u_j+\sum_{i=1}^\rho\lambda_il_{i,n} \lambda \sum_{j=1}^{n-1}l_{\rho+1,j} u_j\\
  & = \sum_{i=1}^{\rho+1}\lambda_i\sum_{j=1}^{n-1}l_{i,j}u_j,
\end{align*}
where $\lambda_{\rho+1}=\sum_{i=1}^\rho\lambda_il_{i,n} \lambda\in \F_{q^m}$.
\end{proof}


Using similar arguments as in the classical Hamming-metric case (see \cite[\S 11.5]{huffpless}), we have the following results.    

\begin{theorem}[Monotonicity]\label{th:ineq}
The following hold:
\begin{itemize}
    \item[{\rm (a)}] If $\rho<\min\{k,m\}$, then $s_{q^m/q}(k,\rho+1) \leq s_{q^m/q}(k,\rho).$
    \item[{\rm (b)}] $s_{q^m/q}(k,\rho) \leq s_{q^m/q}(k+1,\rho)-1$.
    \item[{\rm (c)}] If $\rho<m$, then $s_{q^m/q}(k+1,\rho+1) \leq s_{q^m/q}(k,\rho)+1$.
\end{itemize}
\end{theorem}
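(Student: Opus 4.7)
The plan is to prove each of the three inequalities by an explicit construction on $q$-systems, in the spirit of the classical Hamming-metric arguments. Part (c) admits a direct construction: I would start from a minimum rank-$\rho$-saturating system $\mU\subseteq\Fm^k$ of $\Fq$-dimension $n=s_{q^m/q}(k,\rho)$, embed $\Fm^k$ as the first $k$ coordinates of $\Fm^{k+1}$, and set $\mU':=(\mU\times\{0\})+\langle e_{k+1}\rangle_{\Fq}$, an $[n+1,k+1]_{q^m/q}$ system. For any $v=(v',v_{k+1})\in\Fm^{k+1}$, combining a rank-$\le\rho$ cover of $v'$ from $\mU\times\{0\}$ with the rank-$1$ contribution $v_{k+1}e_{k+1}$ yields a cover of $v$ of total rank at most $\rho+1$. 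For the exactness of $\rho+1$, I would pick $v'\in\Fm^k$ requiring rank exactly $\rho$ (which exists since $\mU$ has covering radius $\rho$) and, using $\rho<m$, choose $v_{k+1}\in\Fm$ lying outside the $\Fq$-spans of the coefficient tuples of the rank-$\rho$ covers of $v'$; such a $v_{k+1}$ exists because those are finitely many $\rho$-dimensional $\Fq$-subspaces of $\Fm$ and a sufficiently rigid $v'$ keeps their union proper in $\Fm$.

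For (b), I would use projection. Take a minimum rank-$\rho$-saturating $[n,k+1]_{q^m/q}$ system $\mV$ and any nonzero $w\in\mV$; let $h:=\dim_{\Fq}(\langle w\rangle_{\Fm}\cap\mV)\ge 1$ be the weight of $\langle w\rangle_{\Fm}$ in $L_\mV$. The quotient $\pi\colon\Fm^{k+1}\to\Fm^{k+1}/\langle w\rangle_{\Fm}\cong\Fm^k$ sends $\mV$ to an $[n-h,k]_{q^m/q}$ system, since the $\Fq$-dimension drops by exactly $h$ while the $\Fm$-span is preserved. Lifting any $v'\in\Fm^k$ to some $v\in\Fm^{k+1}$ and projecting a rank-$\le\rho$ cover of $v$ by $\mV$ produces a rank-$\le\rho$ cover of $v'$ by $\pi(\mV)$, so $\pi(\mV)$ is rank-$\rho'$-saturating for some $\rho'\le\rho$. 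Applying (a) then gives $s_{q^m/q}(k,\rho)\le s_{q^m/q}(k,\rho')\le n-h\le n-1$.

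For (a), I would start from a minimum $\mU$ of $\Fq$-dimension $n$. Since $\rho<\min\{k,m\}$ and any $[k,k]_{q^m/q}$ system is an $\Fm$-basis with covering radius $\min\{k,m\}>\rho$, we have $n>k$, so the $\Fq$-basis of $\mU$ has $\Fm$-dependencies. When $L_\mU$ is not scattered, Lemma~\ref{lemma:ineq} supplies an $[n-1,k]_{q^m/q}$ subsystem $\mU'\subsetneq\mU$ that is rank-$\rho'$-saturating for some $\rho'\le\rho+1$. Because $\mU'\subsetneq\mU$ (both $\Fm$-spanning $\Fm^k$), covering by the smaller system is at least as hard, so $\rho'\ge\rho$; and the minimality of $\mU$ excludes $\rho'=\rho$, leaving $\rho'=\rho+1$ and $s_{q^m/q}(k,\rho+1)\le n-1$. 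When $L_\mU$ is scattered, I would first replace some basis vector $u_n$ by $\lambda u_n$ with $\lambda\in\Fm\setminus\Fq$ to obtain an equal-dimension non-scattered system $\mU''$, then analyse its covering radius to either read off a rank-$(\rho+1)$-saturating system of dimension $n$ directly or reduce to the non-scattered case. The hard part will be this scattered subcase: removing a basis vector from a scattered system may raise the covering radius by more than one (the $\Fm$-dependence coefficients need not have low $\Fq$-rank), so Lemma~\ref{lemma:ineq} does not apply, and the replacement strategy must be examined carefully---using the bounds from Theorem~\ref{thm_LowerBound} and Theorem~\ref{thm:generalUB} as additional constraints---to pin down the exact covering radius of the modified system.
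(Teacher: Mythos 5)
Your treatment of (b) and (c) matches the paper's in substance: (c) is the paper's block-diagonal extension $G'=\left[\begin{smallmatrix}G&0\\0&1\end{smallmatrix}\right]$, and (b) is the paper's deletion of the first row and column, which is precisely your quotient by $\langle e_1\rangle_{\F_{q^m}}$ (your version, quotienting by an arbitrary $w\in\mV$, is marginally more general). Both parts, however, ultimately rest on (a): in (b) you invoke it explicitly, and in (c) you try to avoid it by showing the new system is \emph{exactly} rank-$(\rho+1)$-saturating, but that exactness claim is not justified as written --- the union of the $\rho$-dimensional coefficient spans over all minimal covers of $v'$ could a priori exhaust $\F_{q^m}$, and you give no argument that a ``sufficiently rigid'' $v'$ exists; it is cleaner to settle for $\rho'\le\rho+1$ and apply (a), as the paper does. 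On the positive side, your non-scattered case of (a) is correct and in fact tidier than the paper's: the observation that a subsystem $\mU'\subseteq\mU$ has saturating parameter $\rho'\ge\rho$, combined with Lemma \ref{lemma:ineq} and the minimality of $n$, pins down $\rho'=\rho+1$ cleanly.

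The genuine gap is the scattered case of (a), which you yourself flag as ``the hard part.'' Your proposed fix --- replace $u_n$ by $\lambda u_n$ with $\lambda\in\F_{q^m}\setminus\F_q$ --- does not produce a non-scattered system: the point $\langle u_n\rangle_{\F_{q^m}}$ still has weight $1$ in $\langle u_1,\ldots,u_{n-1},\lambda u_n\rangle_{\F_q}$, since any element $\sum_{i<n}a_iu_i+a_n\lambda u_n$ lying on that point with some $a_i\ne0$, $i<n$, would force $\sum_{i<n}a_iu_i\in\langle u_n\rangle_{\F_{q^m}}$, contradicting scatteredness of $\mU$; and no other point is forced to gain weight. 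Even if non-scatteredness could be arranged, you would still need to control the saturating parameter of the modified system. The paper's route is the key idea you are missing: normalize the generator matrix to the form $[I_k\,|\,u_{k+1},\ldots,u_{n-1},y]$ with $y$ chosen of \emph{minimal rank weight} over all such presentations. Because $e_1,\ldots,e_k$ lie in the system, $\wt_{\rk}(y)=1$ automatically yields a point of weight $2$, so Lemma \ref{lemma:ineq} applies; and when $\wt_{\rk}(y)=\ell\ge2$ one replaces $y$ by $y'=y-b_\ell\sum_jp_je_j$ of strictly smaller rank weight, the identity $w=G'\bigl(z+z_nb_\ell\sum_ip_ie_i\bigr)$ showing that the saturating parameter changes by at most one at each step; minimality of $\wt_{\rk}(y)$ and iteration of this weight reduction then rule out the parameter dropping. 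Without this (or some substitute for the scattered case), part (a) is unproved, and since (b) and (c) both depend on it, the theorem as a whole remains open in your write-up.
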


\begin{proof}
\noindent {\rm (a)} 
    Let $n>k$ and let $n=s_{q^m/q}(k,\rho)$. Let $G\in \F_{q^m}^{k \times n}$ be a
    generator matrix associated with a rank-$\rho$-saturating $[n,k]_{q^m/q}$ system $\mU$.
    We may assume that $G=[I_k|u_{k+1}\ldots,u_{n-1},y]$ for some $y,u_i \in \mU$. 
    Assume further, that over all such choices of $\mU$ and $G$, that $y$
    has minimal rank weight. 
    
    If $\wt_{\rk}(y^T)=1$ then $\mU$ satisfies the hypothesis of Lemma \ref{lemma:ineq} and so there exists a $(\rho+1)$-rank-saturating system of length $n-1$.
    We thus assume that $\wt_{\rk}(y^T)=\ell \geq 2$. 
    Let $\{b_1:=y_{i_1},\dots,b_{\ell}:=y_{i_\ell}\}$ be an $\Fq$-basis
    of $\langle y_1,\dots,y_k\rangle_{\F_q}$.
    We have that $y=b_\ell \sum_{j=1}^k p_j e_j +y'$ for some $p_j \in \Fq$ and $y'\in \F_{q^m}^k$ satisfying
    $y' = (\sum_{j=1}^{\ell -1} a_{1j} b_j,\dots,\sum_{j=1}^{\ell -1}a_{kj} b_j)^T$ for some $a_{ij} \in \Fq$.
    Consider the matrix $G'=[I_k|u_{k+1}\ldots,u_{n-1},y']$,
    and the corresponding $\rho'$-rank-saturating $\Fmk$ system $\mU'$ spanned by its columns. 
    Let $w \in \F_{q^m}^k$. There exists $z \in \F_{q^m}^n$ of rank at most $\rho$ such that $w=Gz$. Therefore,
    \begin{align*}
        w & = Gz \\
          & = \sum_{i\in [k]} z_i e_i +\sum_{i=k+1}^{n-1} z_iu_i + z_n(b_\ell \sum_{i \in [k]}p_ie_i+y')\\
          & =  \sum_{i\in [k] } (z_i +z_nb_\ell p_i)e_i 
          +\sum_{i=k+1}^{n-1} z_iu_i +z_n y'\\
          & =G'(z+z_nb_\ell \sum_{i \in [k]}p_ie_i).
    \end{align*}
    Let $z' = z+z_nb_\ell \sum_{i \in [k]}p_ie_i$. Clearly, $\wt_{\rk}((z')^T) \leq \wt_{\rk}(z^T)+1\leq \rho+1$ and so $\rho' \leq \rho+1$.
    If $\rho'=\rho+1$, then we have $s_{q^m/q}(k,\rho+1)\leq n = s_{q^m/q}(k,\rho)$ and hence the statement of the theorem will follow.
    
    If $y'\in \Fq^k$, then $\mU'$ is an $[n-1,k]_{q^m/q}$ system and so $\rho' = \rho+1$. 
    Suppose then that $x\notin \Fq^k$.
    Since $\wt_{\rk}((y')^T)<\wt_{\rk}(y^T)$, by our choice of
    $\mU$ and $G$, it must be the case that $\rho'\neq \rho$.
    Suppose now that $\rho'\leq \rho-1$. If $\wt_{\rk}((y')^T)=1$ then $\mU'$ satisfies the hypothesis of Lemma~\ref{lemma:ineq} and so there exists a rank-$\rho''$-saturating $[n-1,k]_{q^m/q}$ system $\mU''$ with $\rho'' \leq \rho$, yielding a contradiction to the fact that $n=s_{q^m/q}(k,\rho)$.
    We hence assume that $\wt_{\rk}((y')^T)\geq 2$. Apply a similar argument as before to produce a matrix
    $G'=[I_k|u_{k+1}\ldots,u_{n-1},y'']$ with associated rank-$\rho''$-saturating system $\mU''$
    satisfying $\rho'' \leq \rho'+1 \leq \rho$ and $\wt_{\rk}((y'')^T) < \wt_{\rk}(y^T)$. Again, by our choice of $G$ and $\mU$, it must be the case that $\rho''\leq \rho-1$. Continue, iterating the same argument to produce a sequence of generator matrices $G^{(i)}=[I_k|u_{k+1},\ldots ,u_{n-1},y^{(i)}]$ 
    and associated $[n-1,k]_{q^m/q}$ rank-$\rho^{(i)}$-saturating systems $\mU^{(i)}$
    with $\wt_{\rk}((y^{(i)})^T)<\wt_{\rk}((y^{(i-1)})^T)$ at each step. This sequence will terminate at some $r$
    for which $\wt_{\rk}((y^{(r)})^T)=1$, in which case we may apply Lemma \ref{lemma:ineq} to arrive at a contradiction. We deduce that $\rho' =\rho+1$ and so the result follows.

\medskip

\noindent {\rm (b)}  Let $n>k$ and let $n=s_{q^m/q}(k,\rho)$. Let $G=[\:I_{k+1}|\:A\:]\in \F_{q^m}^{(k+1) \times n}$ be a generator matrix of a rank-$\rho$-saturating $[n,k+1]_{q^m/q}$ system $\mU$. Consider the matrix $G'=[\:I_{k}|\:A'\:]\in \F_{q^m}^{k \times (n-1)}$ found by deleting the first column and row of $G$.
   Let $w' \in \F_{q^m}^k$ and let $w=(0,w')^T \in \F_{q^m}^{k+1}$.
   Since $\mU$ is rank-$\rho$-saturating, there exists $z \in \F_{q^m}^n$ of rank at most $\rho$ such that $w=Gz$ and so $w'=G'z'$, where $z'=(z_2,\ldots,z_n)^T$.
   Since $\wt_{\rk}((z')^T)\leq \wt_{\rk}(z^T)\leq \rho$, then $G'$ generates 
   an $[n-1,k]_{q^m/q}$ rank-$\rho'$-saturating system $\mU'$ with $\rho'\leq \rho$. Therefore, by {\rm (a)},
   \[s_{q^m/q}(k,\rho)\leq s_{q^m/q}(k,\rho') \leq n-1 = s_{q^m/q}(k+1,\rho)-1.\]

\medskip

\noindent {\rm (c)}   Let $n=s_{q^m/q}(k,\rho)$. Let $G\in \F_{q^m}^{k \times n}$ be a generator matrix of a rank-$\rho$-saturating $[n,k]_{q^m/q}$ system $\mU$.  
  Consider the matrix 
  \[
  G' =\left[ \begin{array}{cc}
G & 0 \\
0 & 1 \end{array}\right] \in \F_{q^m}^{(k+1)\times (n+1)},
  \]
  which generates a rank-$\rho'$-saturating $[n+1,k+1]_{q^m/q}$ system $\mU'$.
  It is straightforward to check that for any $w \in \F_{q^m}^{k+1}$, there exists
  $z \in \F_{q^m}^{n+1}$ of rank at most $\rho+1$ such that $w=G'z$.
  Again by {\rm (a)}, we have 
  \[s_{q^m/q}(k+1,\rho+1) \leq s_{q^m/q}(k+1,\rho')\leq n+1 \leq s_{q^m/q}(k,\rho)+1.\]
\end{proof}    

In the following, we define the direct sum of systems to obtain recursive bounds, in analogy with \cite{davydov2000saturating,ughi1987saturated}. 

\begin{definition}
For each $i\in \{1,2\}$, let $\mU_i$ be an $[n_i,k_i]_{q^m/q}$ system, associated with an $[n_i,k_i]_{q^m/q}$ code $\mC_i$.
Let $f:\F_{q^m}^{1\times n_1} \longrightarrow \F_{q^m}^{1\times n_2}$ be an $\F_{q^m}$-linear map.
    The code 
    \[\mC:=\{(u,f(u)+v) : u \in \C_1, v \in \mC_2\}\] 
    is an $[n_1+n_2,k_1+k_2]_{q^m/q}$, which we call the $f$-sum of $\mC_1$ and $\mC_2$. Its associated $[n_1+n_2,k_1+k_2]_{q^m/q}$ system is called the $f$-sum of $\mU_1$ and $\mU_2$, which we denote by $\mU_1 \oplus_{f} \mU_2$.
\begin{enumerate}
\item    
    If $f$ is the identity map, the $f$-sum of $\mU_1$ and $\mU_2$ is called the \emph{Plotkin sum} of $\mU_1$ and $\mU_2$.
\item   
    If $f$ is the zero map, the $f$-sum of $\mU_1$ and $\mU_2$ is called the  \emph{direct sum} of $\mU_1$ and $\mU_2$, which we denote by $\mU\oplus \mU^\prime$.
\end{enumerate}                 
\end{definition}

\begin{theorem}\label{prop:DirectSum}
For each $i\in \{1,2\}$, let $\mU_i$ be an $[n_i,k_i]_{q^m/q}$ 
rank-$\rho_i$-saturating system, associated with an $[n_i,k_i]_{q^m/q}$ code $\mC_i$.
Let $f:\F_{q^m}^{1\times n_1} \longrightarrow \F_{q^m}^{1\times n_2}$ be an $\F_{q^m}$-linear map.
Then $\mU_1 \oplus_{f} \mU_2$ is an $[n_1+n_2,k_1+k_2]_{q^m/q}$ system 
that is rank-$\rho$-saturating, where 
  $\rho\leq \rho_1+\rho_2.$
In particular, if $\rho_1+\rho_2\leq \min\{k_1+k_2,m\}$, then
\[s_{q^m/q}(k_1+k_2,\rho_1+\rho_2) \leq s_{q^m/q}(k_1,\rho_1)+s_{q^m/q}(k_2,\rho_2).\]
\end{theorem}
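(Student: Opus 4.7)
The plan is to apply the characterization of rank-saturating systems via Theorem \ref{thm:characterization}(b), working with an explicit block generator matrix for the $f$-sum. If $G_1, G_2$ are generator matrices for $\mC_1, \mC_2$, then from the definition of $\mC_1 \oplus_f \mC_2$ one reads off the generator matrix
\[
G \;=\; \begin{pmatrix} G_1 & f(G_1) \\ 0 & G_2 \end{pmatrix} \in \F_{q^m}^{(k_1+k_2)\times(n_1+n_2)},
\]
where $f(G_1)$ denotes the $k_1 \times n_2$ matrix obtained by applying $f$ to each row of $G_1$. The $\F_q$-span of the columns of $G$ is $\mU_1 \oplus_f \mU_2$, so the goal is to show that every $w \in \F_{q^m}^{k_1+k_2}$ can be written as $G\lambda^T$ with $\wt_{\rk}(\lambda) \le \rho_1 + \rho_2$.

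The key step is to decompose $w = (w_1, w_2)^T$ and resolve the two blocks \emph{in the right order} to absorb the off-diagonal $f(G_1)$ block. First, I would use the rank-$\rho_2$-saturating property of $\mU_2$, via Theorem \ref{thm:covering} applied to $\mC_2$, to pick $\lambda^{(2)} \in \F_{q^m}^{1\times n_2}$ with $\wt_{\rk}(\lambda^{(2)}) \le \rho_2$ and $G_2(\lambda^{(2)})^T = w_2$. Next, since $w_1 - f(G_1)(\lambda^{(2)})^T \in \F_{q^m}^{k_1}$, the rank-$\rho_1$-saturating property of $\mU_1$ produces $\lambda^{(1)} \in \F_{q^m}^{1\times n_1}$ with $\wt_{\rk}(\lambda^{(1)}) \le \rho_1$ and $G_1(\lambda^{(1)})^T = w_1 - f(G_1)(\lambda^{(2)})^T$. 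Setting $\lambda := (\lambda^{(1)}, \lambda^{(2)})$, a short block computation gives $G\lambda^T = w$, and
\[
\wt_{\rk}(\lambda) \;=\; \dim_{\F_q}\langle \lambda^{(1)}_i, \lambda^{(2)}_j \rangle_{\F_q} \;\le\; \wt_{\rk}(\lambda^{(1)}) + \wt_{\rk}(\lambda^{(2)}) \;\le\; \rho_1+\rho_2,
\]
since the union of a spanning set for $\langle \lambda^{(1)}_i\rangle_{\F_q}$ and one for $\langle \lambda^{(2)}_j\rangle_{\F_q}$ spans the joint $\F_q$-span. By Theorem \ref{thm:characterization}(b), $\mU_1 \oplus_f \mU_2$ is rank-$\rho$-saturating for some $\rho \le \rho_1+\rho_2$.

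For the consequence on $s_{q^m/q}$, take $\mU_1, \mU_2$ of minimum $\F_q$-dimensions $n_i = s_{q^m/q}(k_i,\rho_i)$. The $f$-sum is a rank-$\rho$-saturating $[n_1+n_2, k_1+k_2]_{q^m/q}$ system with $\rho \le \rho_1+\rho_2$. Under the hypothesis $\rho_1+\rho_2 \le \min\{k_1+k_2, m\}$, iterated application of the monotonicity statement of Theorem \ref{th:ineq}(a) gives $s_{q^m/q}(k_1+k_2, \rho_1+\rho_2) \le s_{q^m/q}(k_1+k_2, \rho) \le n_1 + n_2$, as required.

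The only mildly subtle point is the ordering of the two resolution steps: resolving the $w_1$ block first would force an uncontrollable correction into the $w_2$ block via the $f$-image, whereas resolving $w_2$ first leaves a correction that lives cleanly in the image of $G_1$ and can be mopped up by the rank-$\rho_1$-saturating property of $\mU_1$. Everything else is bookkeeping; in particular the proof is uniform in the choice of $\F_{q^m}$-linear map $f$, which is why the Plotkin sum and direct sum cases both fall out as immediate specialisations.
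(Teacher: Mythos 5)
Your proof is correct and follows essentially the same route as the paper's: both write down the block generator matrix $\left(\begin{smallmatrix} G_1 & * \\ 0 & G_2 \end{smallmatrix}\right)$, resolve the bottom block first using the rank-$\rho_2$-saturation of $\mU_2$, absorb the off-diagonal contribution into the top block using the rank-$\rho_1$-saturation of $\mU_1$, bound the rank weight of the concatenated coefficient vector by subadditivity, and finish with monotonicity (Theorem \ref{th:ineq}(a)). The only difference is cosmetic: the paper phrases the top-block step via the auxiliary system spanned by $[\,G_1\,|\,G'\,]$, whereas you correct with $G_1$ alone, which is if anything slightly cleaner.
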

\begin{proof}  
$\C=\C_1 \oplus_f \C_2$ has a generator matrix of the form 
    \[
       G = \left[
           \begin{array}{cc}
              G_1 & G' \\
              0   & G_2
           \end{array}
           \right],
    \]
    where $G_i$ is a generator matrix for $\C_i$ for each $i$ and $G' \in \F_{q^m}^{k_1 \times n_2}$. Let $\mU_i$ be the system generated by $G_i$ and let $\mU'$ be the system generated by $G'$.
    Since $\mU_1$ is rank-$\rho_1$-saturating, $\mU_1+\mU'$ is $\rho'$-saturating for some $\rho' \leq \rho_1$.
Let $v\in \F_{q^m}^{k_1+k_2}$ and write $v=(v^{(1)},v^{(2)})^T$ with each $v^{(i)} \in \F^{k_i}_{q^m}$. 
There exists $(\lambda^{(1)},\lambda')^T \in \F_{q^m}^{n_1+n_2}$ of rank weight at most $\rho'$ and $\lambda^{(2)} \in \F_{q^m}^{n_2}$ of rank weight at most $\rho_2$ such that:
 \[
      v= \left[
           \begin{array}{c}
              v^{(1)} \\ 
              v^{(2)}
           \end{array}
           \right] =
           \left[
           \begin{array}{cc}
              G_1 & G' \\
              0   & G_2
           \end{array}
           \right] 
           \left[
           \begin{array}{c}
              \lambda^{(1)} \\ 
              \lambda'+\lambda^{(2)}
           \end{array}
           \right],
    \]
    and clearly $\lambda =(\lambda^{(1)},\lambda'+\lambda^{(2)})^T$ has rank weight at most $\rho_1+\rho_2$.

Suppose now that $\mU_i$ has $\F_q$-dimension $s_{q^m/q}(k_i,\rho_i)$ for $i \in \{1,2\}$. Then $\mU_1\oplus_f \mU_2$ has $\F_q$-dimension $s_{q^m/q}(k_1,\rho)+s_{q^m/q}(k_2,\rho_2)$. Since $\mU_1\oplus_f \mU_2$ is rank- $\rho''$-saturating with $\rho''\leq \rho_1+\rho_2$, by Theorem \ref{th:ineq}, 
\[s_{q^m/q}(k_1+k_2,\rho_1+\rho_2)\leq s_{q^m/q}(k_1+k_2,\rho'')\leq s_{q^m/q}(k_1,\rho_1)+s_{q^m/q}(k_2,\rho_2),\] 
if $\rho_1+\rho_2\leq \min\{k_1+k_2,m\}$.
\end{proof}

\begin{remark}
The direct sum $\mU_1 \oplus \mU_2$ may be $\rho$-rank-saturating with $\rho < \rho_1 +\rho_2$, as the following example shows. Let $\F_{16}=\F_2[\alpha]$ with $\alpha^4=\alpha+1$. Let $\mU_1$ be the $[2,2]_{16/2}$ system and $\mU_2$ be the  $[3,1]_{16/2}$ system defined, respectively, by
\[G_1=\left[ \begin{array}{cc}
1 & 0 \\
0 & 1 \end{array}\right]\quad \text{ and }\quad  G_2=\left[ \begin{array}{ccc}
1 & \alpha   & \alpha^5  \end{array}\right].\]
The system $\mU_1$ is rank-$2$-saturating and the system $\mU_2$ is rank-$1$-saturating, while $\mU_1\oplus\mU_2$ is rank-$2$-saturating (which can be verified directly with {\sc Magma}).
\end{remark}

\begin{corollary}
 $$s_{q^m/q}(tsh,ts)\le t\cdot  s_{q^m/q}(sh,s).$$
\end{corollary}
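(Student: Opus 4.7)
The plan is to deduce the corollary from Theorem \ref{prop:DirectSum} by induction on $t$. Writing $k_i = sh$ and $\rho_i = s$ for $i\in\{1,\ldots,t\}$, and taking the $f$-sum (for simplicity, the direct sum) of $t$ copies of a rank-$s$-saturating $[sh,sh]_{q^m/q}$ system of minimal $\F_q$-dimension $s_{q^m/q}(sh,s)$, Theorem \ref{prop:DirectSum} gives
\[
s_{q^m/q}(2sh, 2s) \leq 2\, s_{q^m/q}(sh,s),
\]
and by iterating $t-1$ times, one obtains the claimed bound.

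More explicitly, I would set up the induction hypothesis $s_{q^m/q}(jsh, js) \le j\, s_{q^m/q}(sh,s)$ for $j = 1$ trivially, and then in the inductive step apply Theorem \ref{prop:DirectSum} to the pair $(k_1,\rho_1) = (jsh, js)$ and $(k_2,\rho_2) = (sh, s)$, yielding
\[
s_{q^m/q}((j+1)sh, (j+1)s) \le s_{q^m/q}(jsh, js) + s_{q^m/q}(sh, s) \le (j+1)\, s_{q^m/q}(sh,s).
\]

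The only point to verify is the hypothesis of Theorem \ref{prop:DirectSum} at each step, namely $\rho_1+\rho_2 \leq \min\{k_1+k_2, m\}$. The condition $js + s \leq jsh + sh = (j+1)sh$ holds automatically since $h \geq 1$, and the condition $ts \leq m$ is the implicit standing hypothesis on $\rho$ (since $s_{q^m/q}(tsh, ts)$ is only defined for $ts \leq \min\{tsh, m\}$). So there is no genuine obstacle; the corollary is a direct iteration of the subadditivity provided by Theorem \ref{prop:DirectSum}.
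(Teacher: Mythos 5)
Your proof is correct and takes essentially the same approach as the paper: both proceed by induction on $t$, applying the subadditivity of Theorem \ref{prop:DirectSum} to the pair $(k_1,\rho_1)=((t-1)sh,(t-1)s)$ and $(k_2,\rho_2)=(sh,s)$. (One small notational slip: the minimal system is an $[s_{q^m/q}(sh,s),sh]_{q^m/q}$ system, not an $[sh,sh]_{q^m/q}$ one, but this does not affect the argument, and your explicit check of the hypothesis $\rho_1+\rho_2\le\min\{k_1+k_2,m\}$ is a point the paper leaves implicit.)
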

\begin{proof}
We proceed by induction on $t$ (for $t=1$ it is clear).
By Theorem \ref{prop:DirectSum} and by induction hypothesis, we get 
\begin{align*}
s_{q^m/q}(tsh,ts) & \le  \, s_{q^m/q}((t-1)sh,(t-1)s)+ s_{q^m/q}(sh,s)\\
& \leq (t-1)s_{q^m/q}(sh,s)+s_{q^m/q}(sh,s)=t\cdot s_{q^m/q}(sh,s).
\end{align*}
\end{proof}

\section{Constructions}\label{sec:const}

In this section, we present some geometric constructions of rank-saturating systems of small $\F_q$-dimension, following the lines of \cite{denaux2021constructing,davydov2000saturating,ughi1987saturated,davydov1995constructions}, wherein, as we have already mentioned, the two main approaches involve constructions using cutting blocking sets and mixed subgeometries.

\subsection{Constructions from linear cutting blocking sets}

Let us first introduce the notion of a cutting blocking set.

\begin{definition}
A subset $\mathcal{M} \subseteq \PG(k-1,q)$ is a \emph{cutting blocking set} (or \emph{strong blocking set}) if for every hyperplane $\HHH$ of $\PG(k-1,q)$, we have: 
\[\langle \mathcal{M} \cap \HHH \rangle =\HHH.\]
 \end{definition} 

Such sets were introduced in \cite{1930-5346_2011_1_119}, with the original name of \emph{strong blocking sets}, in connection to $\rho$-saturating sets. More explicitly, we have the following result.

\begin{theorem}[Theorem 3.2. of \cite{1930-5346_2011_1_119}]\label{thm:saturating}
Any cutting blocking set in a subgeometry $\mathrm{PG}(k-1,q)$ of  $\mathrm{PG}(k-1,q^{k-1})$ is a $(k-2)$-saturating set in $\mathrm{PG}(k-1,q^{k-1})$.
\end{theorem}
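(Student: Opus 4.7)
The plan is to take a point $P=\langle v\rangle_{\F_{q^{k-1}}}\in\PG(k-1,q^{k-1})$ and exhibit $k-1$ points of the cutting blocking set $\mathcal{M}$ whose $\F_{q^{k-1}}$-span contains $P$. The key observation is that the ambient extension degree matches the projective dimension minus one, which is exactly what lets us reduce an arbitrary $\F_{q^{k-1}}$-point to an $\F_q$-hyperplane problem that the cutting property can handle.

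Concretely, I would first fix an $\F_q$-basis $\{\alpha_1,\dots,\alpha_{k-1}\}$ of $\F_{q^{k-1}}$ and expand the representative as
\[
v=\sum_{i=1}^{k-1}\alpha_i\,v_i,\qquad v_i\in\F_q^k.
\]
Since we only have $k-1$ vectors $v_i$ living in the $k$-dimensional space $\F_q^k$, their $\F_q$-span $W$ has dimension at most $k-1$, hence is contained in some $\F_q$-hyperplane $H$ of the subgeometry $\PG(k-1,q)$.

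Next, I would invoke the cutting-blocking-set hypothesis on this particular $H$: by definition, $\langle\mathcal{M}\cap H\rangle_{\F_q}=H$, so there exist $k-1$ points $P_1,\dots,P_{k-1}\in\mathcal{M}\cap H$ whose chosen representatives $u_1,\dots,u_{k-1}\in\F_q^k$ form an $\F_q$-basis of $H$. Since $v$ lies in the $\F_{q^{k-1}}$-extension of $W\subseteq H$, and the $u_i$ already span $H$ over $\F_q$, it follows that they span the same $H\otimes_{\F_q}\F_{q^{k-1}}$ over $\F_{q^{k-1}}$. Therefore $v\in\langle u_1,\dots,u_{k-1}\rangle_{\F_{q^{k-1}}}$, which translates to
\[
P\in\langle P_1,\dots,P_{k-1}\rangle_{\F_{q^{k-1}}}.
\]
By the definition of a $\rho$-saturating set (span of $\rho+1$ points), this shows $\mathcal{M}$ is $(k-2)$-saturating.

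The two points I would be careful about are (i) verifying that the minimality clause in the definition of $\rho$-saturating is not being claimed here (the statement only asserts that $k-2$ suffices, i.e., that $\mathcal{M}$ is at worst $(k-2)$-saturating), and (ii) noting that the argument genuinely uses only the cutting property applied to a single hyperplane containing $W$, not to all hyperplanes; the slick part is that the count of $k-1$ basis vectors $v_i$ is exactly what forces $W$ into a hyperplane, matching the hypothesis. I do not foresee a real obstacle beyond this bookkeeping: the proof is essentially a one-step translation between the $\F_q$-structure of $\mathcal{M}$ and the $\F_{q^{k-1}}$-structure of the ambient projective space.
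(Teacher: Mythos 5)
Your argument is correct and is essentially the standard proof of this result (the paper itself only quotes the theorem from Davydov--Giulietti--Marcugini--Pambianco without reproving it): expand $v$ over an $\F_q$-basis of $\F_{q^{k-1}}$, observe that the $k-1$ coordinate vectors span at most a hyperplane $H$ of the subgeometry, and apply the cutting property to $H$ to find $k-1$ points of $\mathcal{M}$ whose $\F_{q^{k-1}}$-span contains $v$. Your two caveats are also well placed; in particular, the theorem is indeed read as asserting that $k-2$ suffices, not that it is the exact saturation parameter.
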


In \cite{bonini2020minimal}, they were reintroduced, with the name of \emph{cutting blocking sets}, in order to construct a particular family of minimal codes.

\begin{definition}
 An $[n,k]_{q^m}$ code $\C$ is \emph{minimal} if for every $c,c'\in \C$, $\{i:c'_i\neq 0\}\subseteq \{i:c_i\neq 0\}$ implies $c'=\lambda c$ for some $\lambda\in \F_{q^m}$.
\end{definition}

Such codes have been the subject of intense research over the last twenty years. In \cite{alfarano2019geometric,Full_Characterization} it is shown that they are the geometrical counterparts of minimal codes, via the correspondence introduced in Subsection \ref{subsection:qsystems}. One of the main problems in the theory of minimal codes is the construction of families of short-length codes, which is equivalent to constructing small strong blocking sets. Some recent results can be found in \cite{bartoli2021small,heger2021short,alfarano2020three,alon2023strong,alfarano2023outer,bishnoi2023blocking}.

The $q$-analogue of a cutting blocking set is defined as follows.

\begin{definition} 
 A $\Fmk$ system $\mU$ is called a \emph{linear cutting blocking set} if for  every $\Fm$-hyperplane $\mathcal{H}$ we have $\langle \mathcal{H}\cap \mU\rangle_{\Fm}=\mathcal{H}$.
\end{definition}

Linear cutting blocking sets were introduced recently in \cite{alfarano2021linear}, in connection with minimal codes in the rank metric. In order to define these, we introduce the notion of rank-support. Fix an ordered basis $\Gamma=\{\gamma_1,\ldots,\gamma_m\}$ of 
$\F_{q^m}/\F_q$. For a word 
$c \in \F_{q^m}^{1\times n}$, let $\Gamma(c) \in \F_q^{n \times m}$ be the matrix such that
\[c_i= \sum_{j=1}^m \Gamma(c)_{ij} \gamma_j.\]
The \emph{rank-support} of $c$, which we denote by $\sigma^{\rk}(c)$ is the column space of $\Gamma(c)$.

\begin{definition}
 An $[n,k]_{q^m/q}$ code $\C$ is \emph{minimal} if for every $c,c'\in \C$, $\sigma^{\rk}(c')\subseteq \sigma^{\rk}(c)$ implies $c'=\lambda c$ for some $\lambda\in \F_{q^m}$.
\end{definition}

As shown in \cite{alfarano2021linear}, a $q$-system is a linear cutting blocking set if and only if the associated rank-metric code is minimal.
We will show that, as in the classical setting, linear cutting blocking sets give rise to rank-saturating systems. 

\begin{theorem}\label{thm:cutting}
Let $\mU$ be an $\Fmk$ system. If $\mU$ is a linear cutting blocking set, then it is a rank-$(k-1)$-saturating $[n,k]_{q^{m(k-1)}/q}$ system.
\end{theorem}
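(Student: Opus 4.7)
The plan is to mimic the proof of Theorem \ref{thm:saturating} (\cite[Theorem 3.2]{1930-5346_2011_1_119}), which is the Hamming-metric analogue. The key observation is that $\F_{q^{m(k-1)}}$ has degree $k-1$ as an extension of $\F_{q^m}$, so any vector in $\F_{q^{m(k-1)}}^k$ decomposes into exactly $k-1$ vectors in $\F_{q^m}^k$ once an $\F_{q^m}$-basis of $\F_{q^{m(k-1)}}$ is fixed. This is the $q$-analogue of decomposing a vector in $\F_{q^{k-1}}^k$ into $k-1$ vectors in $\F_q^k$.

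I would first remark that $\mU \subseteq \F_{q^m}^k \subseteq \F_{q^{m(k-1)}}^k$ does indeed define an $[n,k]_{q^{m(k-1)}/q}$ system: it has the same $\F_q$-dimension $n$, and since $\mU$ contains an $\F_{q^m}$-basis of $\F_{q^m}^k$, which is a fortiori an $\F_{q^{m(k-1)}}$-basis of $\F_{q^{m(k-1)}}^k$, its $\F_{q^{m(k-1)}}$-span is the full space.

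Then, by Theorem \ref{thm:characterization}, to establish rank-$(k-1)$-saturation it suffices to show that every $v \in \F_{q^{m(k-1)}}^k$ can be written as an $\F_{q^{m(k-1)}}$-linear combination of at most $k-1$ elements of $\mU$. Fix an $\F_{q^m}$-basis $\beta_1,\ldots,\beta_{k-1}$ of $\F_{q^{m(k-1)}}$ and expand $v$ coordinatewise as $v = \sum_{i=1}^{k-1} \beta_i v_i$ with $v_i \in \F_{q^m}^k$. The $\F_{q^m}$-subspace $V = \langle v_1,\ldots,v_{k-1}\rangle_{\F_{q^m}} \leq \F_{q^m}^k$ has dimension at most $k-1$, hence is contained in some $\F_{q^m}$-hyperplane $\mH$ of $\F_{q^m}^k$. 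Now I would invoke the linear cutting blocking set hypothesis for $\mH$: there is $\langle \mH \cap \mU\rangle_{\F_{q^m}} = \mH$, so one can select $u_1,\ldots,u_{k-1} \in \mH \cap \mU$ forming an $\F_{q^m}$-basis of $\mH$. Writing each $v_i$ as an $\F_{q^m}$-linear combination of the $u_j$ and collecting terms then yields
\[
v \;=\; \sum_{i=1}^{k-1} \beta_i v_i \;=\; \sum_{j=1}^{k-1} \Big( \sum_{i=1}^{k-1} \beta_i \mu_{ij} \Big) u_j \;\in\; \langle u_1,\ldots,u_{k-1}\rangle_{\F_{q^{m(k-1)}}},
\]
which gives precisely the desired $(k-2)$-saturation of $L_\mU$ in $\PG(k-1,q^{m(k-1)})$, i.e.\ rank-$(k-1)$-saturation of $\mU$.

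The only real subtlety is that the $v_i$ need not be $\F_{q^m}$-linearly independent; this is handled cleanly by first enlarging their span to an $\F_{q^m}$-hyperplane before invoking the cutting property, which is always possible because $\dim_{\F_{q^m}} V \leq k-1$. Beyond this, the argument is just a faithful translation of the classical subgeometry argument, with $\F_{q^{m(k-1)}}/\F_{q^m}$ playing the role of $\F_{q^{k-1}}/\F_q$, so I do not anticipate any further obstacle.
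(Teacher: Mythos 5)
Your proof is correct, but it takes a genuinely different route from the paper's. The paper argues by translation through known equivalences: a linear cutting blocking set corresponds to a minimal $\Fmk$ rank-metric code (\cite[Corollary 5.7]{alfarano2021linear}), whose associated projective Hamming-metric code is minimal (\cite[Theorem 5.13]{alfarano2021linear}), hence $L_\mU$ is a cutting blocking set in $\PG(k-1,q^m)$, and the classical Theorem \ref{thm:saturating} then yields $(k-2)$-saturation in $\PG(k-1,q^{m(k-1)})$. You instead inline the subgeometry argument directly at the level of the $q$-system: decompose $v\in\F_{q^{m(k-1)}}^k$ along an $\F_{q^m}$-basis of $\F_{q^{m(k-1)}}$, trap the $k-1$ components in an $\F_{q^m}$-hyperplane $\mH$ (possible since they span at most a $(k-1)$-dimensional space), and use $\langle\mH\cap\mU\rangle_{\F_{q^m}}=\mH$ to extract $k-1$ elements of $\mU$ whose $\F_{q^{m(k-1)}}$-span contains $v$; Theorem \ref{thm:characterization} then applies with $\mS=\langle u_1,\ldots,u_{k-1}\rangle_{\F_q}$. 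Each step checks out, including the preliminary observation that $\mU$ remains an $[n,k]_{q^{m(k-1)}/q}$ system. What each approach buys: yours is self-contained and avoids both the minimal-code dictionary and the external citation, making transparent exactly where the cutting property is used; the paper's is shorter given that machinery and exhibits the conceptual chain (linear cutting blocking set $\leftrightarrow$ minimal rank-metric code $\leftrightarrow$ cutting blocking set $L_\mU$ $\rightarrow$ saturating set). One small caveat, shared with the paper's own write-up: your argument shows every point is $(k-2)$-saturated, i.e.\ the saturation parameter is at most $k-1$, without verifying the minimality required by Definition \ref{def: saturating}; this is harmless for the intended application, since the bound on $s_{q^{m(k-1)}/q}(k,k-1)$ follows from $\rho\le k-1$ together with the monotonicity of Theorem \ref{th:ineq}.
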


\begin{proof}
The system $\mU$ is a linear cutting blocking set in $\F_{q^m}^k$, so that the $[n,k]_{q^m/q}$ code $\C$ associated with $\mU$ is a minimal code in the rank metric by \cite[Corollary 5.7]{alfarano2021linear}. Then the projective Hamming-metric code $\C_{L_\mU}$ associated with $\C$ is a minimal code in the Hamming metric by \cite[Theorem~5.13]{alfarano2021linear} (indeed, $\C_{L_\mU}$ is the projectivisation of the code $\C^H$ in that reference, and we are using also the trivial fact that a code is minimal if and only if its projectivisation is minimal). Hence $L_\mU$ is a cutting blocking set in ${\rm PG}(k-1,q^m)$. Then $L_\mU$ is a $(k-2)$-saturating set in ${\rm PG}(k-1,q^{m(k-1)})$ by Theorem \ref{thm:saturating}. By definition, this means that $\mU$ is a rank-$(k-1)$-saturating $[n,k]_{q^{m(k-1)}/q}$ system.
\end{proof}

\begin{corollary}\label{coro:lincutsat}
For every $m,k\geq 2$,
\[k+m-1\leq s_{q^{m(k-1)}/q}(k,k-1)\leq l_{q^m/q}(k)\leq 2k+m-2,\]
where $l_{q^m/q}(k)$ is the minimum ${\mathbb F}_{q}$-dimension of a linear cutting
blocking set in ${\mathbb F}_{q^m}^k$.
\end{corollary}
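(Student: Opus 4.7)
The corollary comprises three inequalities, and my plan is to handle each separately.

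For the lower bound $k+m-1 \leq s_{q^{m(k-1)}/q}(k,k-1)$, I would plug directly into Theorem \ref{thm_LowerBound} (equivalently the bound \eqref{eq:lb}), working over the extension $\F_{q^{m(k-1)}}/\F_q$ — i.e.\ with $m$ in the formula replaced by $m(k-1)$ — and with $\rho = k-1$. In the generic case $q>2$, the right-hand side of \eqref{eq:lb} simplifies to
\[\left\lceil \frac{m(k-1)\,k}{k-1}\right\rceil - m(k-1) + (k-1) \;=\; mk - m(k-1) + k - 1 \;=\; m+k-1.\]
The remaining sub-cases of \eqref{eq:fullineq} (namely $q=2$ with $\rho >1$, and $q=2$ with $\rho=1$, which forces $k=2$) are checked analogously — using that $1/(k-1)<1$ for $k>2$ — and all yield the same value $m+k-1$.

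The middle inequality $s_{q^{m(k-1)}/q}(k,k-1) \leq l_{q^m/q}(k)$ is a direct application of Theorem \ref{thm:cutting}. By definition of $l_{q^m/q}(k)$, there exists a linear cutting blocking set $\mathcal{U} \subseteq \F_{q^m}^k$ with $\dim_{\F_q}\mathcal{U} = l_{q^m/q}(k)$. Theorem \ref{thm:cutting} then says that this same $\mathcal{U}$, viewed as an $\F_q$-subspace of $\F_{q^{m(k-1)}}^k$, is a rank-$(k-1)$-saturating $[l_{q^m/q}(k),k]_{q^{m(k-1)}/q}$ system. The inequality is immediate from the definition of $s_{q^{m(k-1)}/q}(k,k-1)$ as a minimum over all such systems.

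The upper bound $l_{q^m/q}(k) \leq 2k+m-2$ is the main obstacle; it is a purely geometric assertion about the existence of a short linear cutting blocking set in $\F_{q^m}^k$, and does not follow from any of the earlier results in the paper. The plan here is to exhibit, or cite, an explicit construction of a linear cutting blocking set of $\F_q$-dimension $2k+m-2$. A natural candidate is to take the $k$ standard basis vectors of $\F_{q^m}^k$ together with $m+k-2$ carefully chosen vectors — for instance, $m-1$ vectors of the form $\alpha^j e_1$ where $\{1,\alpha,\ldots,\alpha^{m-1}\}$ is an $\F_q$-basis of $\F_{q^m}$, supplemented by $k-1$ "bridging" vectors of type $\alpha(e_1+e_i)$ designed to guarantee the cutting property on hyperplanes that avoid the distinguished first coordinate. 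Verifying the condition $\langle \mathcal{H}\cap\mathcal{U}\rangle_{\F_{q^m}} = \mathcal{H}$ for every $\F_{q^m}$-hyperplane $\mathcal{H}$ is the technical heart of this step; since a construction achieving exactly this $\F_q$-dimension is already present in the literature on linear cutting blocking sets (see \cite{alfarano2020three}), I would cite that result rather than re-prove it here.
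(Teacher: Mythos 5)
Your proposal is correct and follows essentially the same route as the paper: the lower bound is Theorem \ref{thm_LowerBound} applied over $\F_{q^{m(k-1)}}/\F_q$ with $\rho=k-1$, the middle inequality is a direct application of Theorem \ref{thm:cutting} to a minimal linear cutting blocking set, and the final upper bound is the cited existence of a $[2k+m-2,k]_{q^m/q}$ linear cutting blocking set. The only slip is bibliographic: the relevant existence result is \cite[Corollary 6.11]{alfarano2021linear} (on \emph{linear} cutting blocking sets, the $q$-analogue), not \cite{alfarano2020three}, which concerns classical cutting blocking sets in the Hamming-metric setting.
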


\begin{proof}
The upper bound is a direct consequence of Theorem \ref{thm:cutting} and of \cite[Corollary 6.11.]{alfarano2021linear}, where it is shown that for every $m,k\geq 2$, there exists a $[2k+m-2,k]_{q^{m}/q}$ linear cutting blocking set. The lower bound is the one by Theorem \ref{thm_LowerBound}.
\end{proof}

\begin{remark}
Quite remarkably, the lower bound coincides with the one for linear cutting blocking set given in \cite[Corollary 5.10]{alfarano2021linear}, calculated over the subfield $\F_{q^m}$. Note however that in \cite{neriMRD} it is proved that the bound is not sharp for linear cutting blocking sets when $m<(k-1)^2$. It would be interesting to know if a similar result holds also for saturating systems.
\end{remark}



\begin{theorem}
The equality 
\[s_{q^{2r}/q}(3,2)=r+2\]
holds if one of the following is true:
\begin{itemize}
    \item[\rm (a)] $r\not\equiv 3,5 \bmod 6$ and $r\ge 4$;
    \item[\rm (b)] $\gcd(r,(q^{2s}-q^s+1)!)=1$, $r$ odd, $1\leq s\leq r$, $\gcd(r,s)=1$;
    \item[\rm (c)] $r=5$, $q=p^{15h+s}$, $p\in\{2,3\}$, $\gcd(s,15)=1$;  
    \item[\rm (d)] $r=5$, $q=5^{15h+1}$;
    \item[\rm (e)] $r=5$, $q$ odd, $q\equiv 2,3\bmod 5$ and for $q=2^{2h+1}$, $h\geq 1$.
\end{itemize}
\end{theorem}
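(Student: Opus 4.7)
The plan is to prove the two matching bounds separately, with the lower bound essentially automatic and the upper bound reduced, via the cutting-blocking-set machinery of the previous subsection, to known existence results for scattered subspaces.

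For the lower bound, I would substitute $m=2r$, $k=3$, $\rho=2$ into the general lower bound (\ref{eq:lb}). In the case $q>2$ this gives $\lceil 6r/2\rceil - 2r + 2 = r+2$, and in the case $q=2$ we obtain $\lceil (6r-1)/2\rceil - 2r + 2 = 3r-2r+2 = r+2$ as well (since $6r-1$ is odd). So $s_{q^{2r}/q}(3,2)\geq r+2$ holds uniformly in $q$, independently of which of the conditions (a)--(e) is assumed.

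For the upper bound I would invoke Corollary \ref{coro:lincutsat} with $k=3$ and $m$ taken to be $r$ (so that $m(k-1)=2r$). It asserts
\[
s_{q^{2r}/q}(3,2)\ \leq\ l_{q^r/q}(3),
\]
where $l_{q^r/q}(3)$ is the minimum $\mathbb{F}_q$-dimension of a linear cutting blocking set in $\mathbb{F}_{q^r}^3$. Hence it is enough to exhibit, for each of the hypotheses (a)--(e), a linear cutting blocking set in $\mathbb{F}_{q^r}^3$ of $\mathbb{F}_q$-dimension exactly $r+2$ (which meets the combinatorial lower bound $k+m-1$ for $l_{q^r/q}(3)$ from Corollary~\ref{coro:lincutsat}).

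The remaining construction step is the heart of the matter. By \cite{alfarano2021linear}, an $[r+2,3]_{q^r/q}$ linear cutting blocking set is the same data as a minimal rank-metric code of those parameters; and equivalently such a system corresponds to a suitable evasive/scattered $\mathbb{F}_q$-subspace of $\mathbb{F}_{q^r}^3$ whose associated linear set meets every hyperplane in a spanning configuration. Each of the conditions (a)--(e) is precisely the hypothesis under which such a short minimal code (equivalently, a minimum-size linear cutting blocking set of the right parameters) is known to exist in the literature on scattered polynomials and minimal rank-metric codes. The plan is therefore, condition by condition, to point to the explicit construction available and invoke its already-established cutting property.

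The main obstacle is exactly this last step: the cutting property is sensitive, and is precisely what forces the arithmetic hypotheses in (a)--(e). Verifying it for a candidate $\mathbb{F}_q$-subspace amounts to checking that no hyperplane $\mathcal{H}\subseteq \mathbb{F}_{q^r}^3$ meets $\mathcal{U}$ in a proper subspace of $\mathcal{H}$, which translates into non-trivial divisibility and irreducibility constraints on the associated linearized/scattered polynomial. The congruence and gcd conditions listed in (a)--(e) are imposed exactly to rule out the bad factorizations; once these are in force, the existence results from the literature close the argument.
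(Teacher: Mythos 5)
Your proposal matches the paper's proof: the lower bound $r+2$ comes from substituting $m=2r$, $k=3$, $\rho=2$ into Theorem \ref{thm_LowerBound} (and the $q=2$ case indeed still yields $r+2$), while the upper bound comes from the known existence, under each of the hypotheses (a)--(e), of an $[r+2,3]_{q^r/q}$ linear cutting blocking set, promoted to a rank-$2$-saturating $[r+2,3]_{q^{2r}/q}$ system via Theorem \ref{thm:cutting} (equivalently, Corollary \ref{coro:lincutsat}). The paper likewise leaves the condition-by-condition constructions to the cited literature, so your argument is essentially identical to the one given there.
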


\begin{proof}
According to \cite{alfarano2021linear,lia2023short,bartoli2021evasive}, under any of these hypothesis $[r+2,3]_{q^r/q}$ linear cutting blocking sets exist. So by Theorem \ref{thm:cutting}, rank-$2$-saturating $[r+2,3]_{q^{2r}/q}$ systems exist. The equality comes from the fact that in this case the upper bound meets the lower bound.
\end{proof}

\begin{remark}
Let us remark that, according to \cite{gruica2022generalised},  $[r+3,3]_{q^r/q}$ linear cutting blocking sets exist for any $m$ and $q$. So, in general $s_{q^{2r}/q}(3,2)\in\{r+2,r+3\}$. 
\end{remark}

\begin{example}
Let $\lambda$ in $\F_{16}$ such that $\lambda^4=\lambda+1$. The $[6,3]_{16/2}$ system with generator matrix
 $$G= \begin{pmatrix} \lambda^{4} & \lambda^{10} & \lambda^{8} & \lambda^{3} & \lambda^{9} & \lambda^{7} \\
 \lambda^{14} & \lambda^{8} & \lambda & \lambda^{8} & 0 & \lambda^{8} \\
 \lambda^{10} & 0 & \lambda^{6} & \lambda^{5} & \lambda^{11} & \lambda^{3} \end{pmatrix},$$
is a linear cutting blocking set, as shown in \cite[Example 6.9]{alfarano2021linear}. So the $[6,3]_{256/2}$ system $\mU$ with the same generator matrix is a rank-$2$-saturating system. It has the smallest $\F_2$-dimension. The linear set $L_\mU$ is scattered.
\end{example}

\begin{remark}\label{remark:neri} In \cite{neriMRD} it is shown that, for all $q$, there exists an $[8,4]_{q^3/q}$ linear cutting blocking set. Therefore, by Theorem \ref{thm:cutting} there exists a rank-$3$-saturating  $[8,4]_{q^{9}/q}$ system and hence 
\[6\leq s_{q^9/q}(4,3)\leq 8.\]
In this case, their construction is independent of $q$.

On the other hand, for $q=2^h$ with $h$ odd, they show that the $[8,4]_{q^4/q}$ system
\[\mU=\left\{\left(\begin{array}{c} x\\y\\x^{q}+y^{q^2}\\x^{q^2}+y^q+y^{q^2}\end{array}\right):x,y\in \F_{q^4}\right\}.\]
is a linear cutting blocking set (while for $h$ even the result is no longer true) and by Theorem~\ref{thm:cutting}, is a $[8,4]_{q^{12}/q}$ rank-$3$-saturating system. So \[7\leq s_{q^{12}/q}(4,3)\leq 8,\]
for $q=2^h$ with $h$ odd. Note that, for $h$ even, $\mU$ may eventually be still a rank-$3$-saturating system in spite of the fact that Theorem \ref{thm:cutting} is not applicable. It would be interesting to know whether such an example of dependence on $q$ exists also for saturating systems.
\textcolor{black}{Note that from the dual distance bound, $\mU$ is a rank-$\rho$-saturating $[8,4]_{q^4/q}$ system, with $\rho\leq 5-d_{\rk}(\mC)$, where $\mC$ is a code associated with $\mU$.
In particular, if $\mC$ has minimum rank distance 2, $\mU$ is $\rho$-rank-saturating, with $\rho\leq 3$.
A parity-check matrix for $\mC$ (up to equivalence) is given by 
\cite[Proposition 4.14]{neriMRD}:
\[
   H=\left(
     \begin{array}{cc}
         0 & b \\
         b & 0 \\
         b^{q^2} & b^{q^2}+b^{q^3} \\
         b^{q^3} & b^{q^2}
     \end{array}
   \right),
\]
where $b \in \F^4_{q^4}$ has $\F_q$-rank equal to $4$ and $b^{q^j}:=(b^{q^i}_1,\dots,b^{q^i}_4)$ for each $i$.
From this it is easy to see that no word of $\F_{q}^8$ is contained in the nullspace of $H$ and hence $\mC$ has minimum distance at least 2.}

Finally, in \cite{neriMRD} it is shown that if $[t,k]_{q^m/q}$ is a linear cutting blocking set, then one can construct a $[t+m,k+1]_{q^m/q}$ linear cutting blocking set. In our terms, by Theorem~\ref{thm:cutting} we get that if a $[t,k]_{q^m/q}$ linear cutting blocking set exists, then
\[s_{q^{m(k-1)}/q}(k,k-1)\leq t \quad \text{ and } \quad s_{q^{mk}/q}(k+1,k)\leq t+m.\]
\end{remark}

\subsection{A construction from subgeometries}

In this subsection, we outline a construction that exploits the properties of particular subgeometries of $\mathrm{PG}(k-1,q^m)$, i.e. those arising from subfields of $\F_{q^m}$.

For the purposes of exposition, we start with a special case, which will serve as an example of a more general construction.

\begin{proposition}\label{prop:construction_rho_4}
Let $\F_{q^2}=\F_q[\alpha]$. For $k\geq 3$, the $[2k-3,k]_{q^4/q}$ system $\mathcal{U}$ 
defined by
\[\mathcal{U}=\left\{\left(\begin{array}{c}u\\\hline w \end{array}\right) : u\in\F_{q}^3,w\in\F_{q^2}^{k-3}\right\},\]
which has an associated generator matrix given by:
\[G=\left[ \begin{array}{c|c|c}
I_3 & \mathbf{0}   & \mathbf{0}        \\
\hline
\mathbf{0} & I_{k-3} & \alpha I_{k-3} \end{array}\right],\]
is rank-$3$-saturating. In particular, we have:
\[s_{q^4/q}(k,3)\leq 2k-3.\]
\end{proposition}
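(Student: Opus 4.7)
The plan is to verify the claim via characterization (b) of Theorem \ref{thm:characterization}: we must show that every vector $v\in \F_{q^4}^k$ admits an expansion as an $\F_{q^4}$-combination of the columns of $G$ with coefficient vector of rank weight at most $3$, and that $3$ is the smallest value with this property. Read the natural $\F_q$-basis of $\mathcal{U}$ off the columns of $G$: namely $e_1,e_2,e_3$ together with $e_{3+j}$ and $\alpha e_{3+j}$ for $j\in[k-3]$, viewed as vectors in $\F_{q^4}^k$. Any such decomposition amounts to data $(\lambda_1,\lambda_2,\lambda_3,\mu_1,\nu_1,\dots,\mu_{k-3},\nu_{k-3})\in \F_{q^4}^{2k-3}$ satisfying $\lambda_i=v_i$ for $i\in[3]$ and $\mu_j+\alpha\nu_j=v_{3+j}$ for $j\in[k-3]$; its rank weight is then precisely $\dim_{\F_q}\langle v_1,v_2,v_3,\mu_1,\nu_1,\dots,\mu_{k-3},\nu_{k-3}\rangle_{\F_q}$. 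So the whole game is: can we solve the $k-3$ linear equations $\mu_j+\alpha\nu_j=v_{3+j}$ while keeping all the $\mu_j,\nu_j$ in an $\F_q$-subspace of dimension at most $3$ that already contains $v_1,v_2,v_3$?

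The heart of the argument is the following observation, which I would isolate as a lemma: \emph{every $3$-dimensional $\F_q$-subspace $W\leq \F_{q^4}$ satisfies $W+\alpha W=\F_{q^4}$.} The point is a parity mismatch. If $\alpha W\subseteq W$, then $W$ is stable under multiplication by $\F_q[\alpha]=\F_{q^2}$, hence is an $\F_{q^2}$-subspace of $\F_{q^4}$; but then $\dim_{\F_q}W$ is forced to be even, contradicting $\dim_{\F_q}W=3$. Thus $W+\alpha W$ properly contains $W$, so it has $\F_q$-dimension at least $4$, and equality with $\F_{q^4}$ follows by a dimension count.

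With this lemma in hand, the covering step is essentially automatic. Given $v\in \F_{q^4}^k$, choose a $3$-dimensional $\F_q$-subspace $W\leq \F_{q^4}$ containing $v_1,v_2,v_3$ (possible since these three elements span at most a $3$-dimensional $\F_q$-subspace of $\F_{q^4}$). By the lemma $W+\alpha W=\F_{q^4}$, so for each $j\in[k-3]$ one can select $\mu_j,\nu_j\in W$ with $\mu_j+\alpha\nu_j=v_{3+j}$. All coordinates of the resulting $\lambda$ then lie in $W$, yielding $\wt_{\rk}(\lambda)\leq \dim_{\F_q}W=3$. For the exactness of $\rho=3$, take $v=(1,\alpha,\beta,0,\dots,0)^T$ with $\beta\in \F_{q^4}\setminus \F_{q^2}$: the equalities $\lambda_i=v_i$ for $i\in[3]$ are forced, and $\{1,\alpha,\beta\}$ is $\F_q$-linearly independent, so every admissible $\lambda$ has rank weight at least $3$.

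The only non-routine step is the key lemma, and this is precisely where the specific parameter regime $m=4$, $\rho=3$ is exploited: the argument relies on the parity mismatch between $\dim_{\F_q}W=3$ and the evenness forced by $\F_{q^2}$-stability inside $\F_{q^4}$. This is what sharpens the generic upper bound $m(k-\rho)+\rho=4k-9$ of Theorem \ref{thm:generalUB} down to $2k-3$, and it suggests that any generalization to other $(m,\rho)$ pairs will require an analogous subfield-stability obstruction.
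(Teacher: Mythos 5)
Your proof is correct, and it takes a genuinely different route from the paper's. The paper fixes a decomposition $\F_{q^4}=\F_{q^2}+\langle\beta_1,\beta_2\rangle_{\F_q}$, defines projection maps $\pi_{\beta_1},\pi_{\beta_2},\pi_{\F_{q^2}}$, and runs an explicit three-step Gram--Schmidt-style elimination that exhibits $v=\lambda^{(1)}u^{(1)}+\lambda^{(2)}u^{(2)}+\lambda^{(3)}u^{(3)}$ with $u^{(1)},u^{(2)},u^{(3)}\in\mU$ (i.e.\ it verifies characterization (a)/(c) directly), at the cost of a substantial case analysis when the various denominators vanish. You instead verify characterization (b) by noting that the coefficients on $e_1,e_2,e_3$ are forced to be $v_1,v_2,v_3$ and that the remaining coordinates only need to solve $\mu_j+\alpha\nu_j=v_{3+j}$ inside a fixed $3$-dimensional $\F_q$-subspace $W\supseteq\langle v_1,v_2,v_3\rangle_{\F_q}$; the single lemma $W+\alpha W=\F_{q^4}$, proved by the parity obstruction to $\F_{q^2}$-stability, then does all the work with no case splitting. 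Both arguments use the same exactness witness (a coordinate triple that is $\F_q$-linearly independent, whose coefficients are forced). What your approach buys is brevity and robustness: the lemma generalizes verbatim to the paper's Theorem \ref{thm:subgeo}, since for any $\F_q$-subspace $W\leq\F_{q^{rt}}$ the space $W'=W+\alpha W+\cdots+\alpha^{t-1}W$ satisfies $\alpha W'\subseteq W'$ (because $\alpha^t\in\F_q[\alpha]$ is an $\F_q$-combination of $1,\alpha,\dots,\alpha^{t-1}$), hence is an $\F_{q^t}$-subspace of dimension divisible by $t$, which forces $W'=\F_{q^{rt}}$ whenever $\dim_{\F_q}W=(r-1)t+1$. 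So the "subfield-stability obstruction" you anticipate at the end is exactly what makes the general construction work, and your method would in fact streamline the paper's proof of the general theorem as well.
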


\begin{proof}
Fix $\beta_1,\beta_2\in \F_{q^4}$ such that $\F_{q^4}=\F_{q^2}+\langle\beta_1,\beta_2\rangle_{\F_q}$. 
For any $w\in\F_{q^4}$, write $w=\pi_{\beta_1}(w)\beta_1+\pi_{\beta_2}(w)\beta_2+\pi_{\F_{q^2}}(w)$ for $\pi_{\beta_1}(w),\pi_{\beta_2}(w)\in\F_q$ and $\pi_{\F_{q^2}}(w)\in\F_{q^2}$.

Consider a vector $v=(v_1,\ldots,v_k)^T\in\F_{q^4}^k$; we will show that $v=\lambda^{(1)} u^{(1)}+\lambda^{(2)} u^{(2)}+\lambda^{(3)}u^{(3)}$ for some $\lambda^{(1)},\lambda^{(2)},\lambda^{(3)}\in\F_{q^4}$ and $u^{(1)},u^{(2)},u^{(3)}\in\mathcal{U}$.
We first define the following functions: 
\[
\begin{split}
\varphi_1:\ \F_{q^4}\times\F_{q^4} &\longrightarrow \F_{q}\\
(x_1,x_2)&\longmapsto 
\left\{ \begin{array}{cl}
    \pi_{\beta_1}(x_1)^{-1}\pi_{\beta_1}(x_2) & \text{ if } \pi_{\beta_1}(x_1) \neq 0, \\
    0 & \text{ otherwise; } 
\end{array}
\right.
\end{split}
\]
and $\varphi_2:\ \F_{q^4}\times\F_{q^4}\times \F_{q^4} \longrightarrow \F_{q^2}$,
where 
\[
\varphi_2(x_1,x_2,x_3):=
\frac{\pi_{\F_{q^2}}(x_2)-\pi_{\F_{q^2}}(x_1)\varphi_1(x_1,x_2)}{\pi_{\beta_2}(x_2)-\pi_{\beta_2}(x_1)\varphi_1(x_1,x_2)}(\pi_{\beta_2}(x_3)-\pi_{\beta_2}(x_1)\varphi_1(x_1,x_3)),
\]
if $\pi_{\beta_2}(v_2)\ne \pi_{\beta_2}(v_1)\varphi_1(v_1,v_2)$ and $\varphi_2(x_1,x_2,x_3):=0$, otherwise.

We will first suppose that the following hold: 
\begin{itemize}
    \item[(I)]$\pi_{\beta_1}(v_1)\ne 0$,
    \item[(II)] $\pi_{\beta_2}(v_2)\ne \pi_{\beta_2}(v_1)\varphi_1(v_1,v_2)$,
    \item[(III)] $\pi_{\F_{q^2}}(v_3)\ne\pi_{\F_{q^2}}(v_1)\varphi_1(v_1,v_3)+\varphi_2(v_1,v_2,v_3)$.
\end{itemize}
Let
\[
\lambda^{(1)}:=\frac{v_1}{\pi_{\beta_1}(v_1)},
\]

\[
u^{(1)}:=\left(\begin{array}{c}
	\pi_{\beta_1}(v_1)\\\,\pi_{\beta_1}(v_2)\\ \pi_{\beta_1}(v_3)\\\hline \pi_{\beta_1}(v_4)\\\vdots\\ \pi_{\beta_1}(v_k)
\end{array}\right),
\]

\[
\lambda^{(2)}:=\beta_2+\frac{\pi_{\F_{q^2}}(v_2)-\pi_{\F_{q^2}}(v_1)\varphi_1(v_1,v_2)}{\pi_{\beta_2}(v_2)-\pi_{\beta_2}(v_1)\varphi(v_1,v_2)}=\frac{v_2-\lambda^{(1)}u^{(1)}_2-\pi_{\beta_1}(v_2-\lambda^{(1)}u^{(1)}_2)\beta_1}{\pi_{\beta_2}(v_2-\lambda^{(1)}u^{(1)}_2)},
\]

\[
u^{(2)}:=\left(\begin{array}{c}
0\\ \pi_{\beta_2}(v_2)-\pi_{\beta_2}(v_1)\varphi_1(v_1,v_2)\\ \pi_{\beta_2}(v_3)-\pi_{\beta_2}(v_1)\varphi_1(v_1,v_3)\\
\hline \pi_{\beta_2}(v_4)-\pi_{\beta_2}(v_1)\varphi_1(v_1,v_4)\\ \vdots\\ 
\pi_{\beta_2}(v_k)-\pi_{\beta_2}(v_1)\varphi_1(v_1,v_4)
\end{array}\right) = \left(\begin{array}{c}
	0\\\,\pi_{\beta_2}(v_2)\\ \pi_{\beta_2}(v_3)\\\hline \pi_{\beta_2}(v_4)\\\vdots\\ \pi_{\beta_2}(v_k)
\end{array}\right)- \left(\begin{array}{c}
    0\\
    \pi_{\beta_2}(\lambda^{(1)}u^{(1)}_2)\\
    \pi_{\beta_2}(\lambda^{(1)}u^{(1)}_3)\\
    \hline
    \pi_{\beta_2}(\lambda^{(1)}u^{(1)}_4)\\
    \vdots\\
    \pi_{\beta_2}(\lambda^{(1)}u^{(1)}_k)\\
\end{array}\right),
\]

\[
\begin{split}
\lambda^{(3)}&:=\pi_{\F_{q^2}}(v_3)-\pi_{\F_{q^2}}(v_1)\varphi_{1}(v_1,v_3)-\varphi_2(v_1,v_2,v_3)\\
&=v_3-\lambda^{(1)}u^{(1)}_3-\lambda^{(2)}u^{(2)}_3-\pi_{\beta_1}(v_3-\lambda^{(1)}u^{(1)}_3-\lambda^{(2)}u^{(2)}_3)\beta_1-\pi_{\beta_2}(v_3-\lambda^{(1)}u^{(1)}_3-\lambda^{(2)}u^{(2)}_3)\beta_2
\end{split}
\]

\[
\begin{split}
u^{(3)}:=&\left(\begin{array}{c}
	0\\0\\ 1\\\hline 0\\\vdots\\ 0
\end{array}\right)+\frac{1}{\lambda^{(3)}}
\left(\begin{array}{c}
	0\\0\\ 0\\\hline \pi_{\F_{q^2}}(v_4)-\pi_{\F_{q^2}}(v_1)\varphi_{1}(v_1,v_4)-\varphi_2(v_1,v_2,v_4)\\
	\vdots\\
	\pi_{\F_{q^2}}(v_k)-\pi_{\F_{q^2}}(v_1)\varphi_{1}(v_1,v_k)-\varphi_2(v_1,v_2,v_k)\\
\end{array}\right)\\
=& \left(\begin{array}{c}
	0\\0\\ 1\\\hline 0\\\vdots\\ 0
\end{array}\right)+\frac{1}{\lambda^{(3)}}
\left(\begin{array}{c}
	0\\0\\ 0\\\hline \pi_{\F_{q^2}}(v_4)\\\vdots\\ \pi_{\F_{q^2}}(v_k)
\end{array}\right)-\frac{1}{\lambda^{(3)}}
\left(\begin{array}{c}
	0\\0\\ 0\\\hline \pi_{\F_{q^2}}(\lambda^{(1)}u^{(1)}_4)\\\vdots\\ \pi_{\F_{q^2}}(\lambda^{(1)}u^{(1)}_k)
\end{array}\right)-\frac{1}{\lambda^{(3)}}
\left(\begin{array}{c}
	0\\0\\ 0\\\hline \pi_{\F_{q^2}}(\lambda^{(2)}u^{(2)}_4)\\\vdots\\ \pi_{\F_{q^2}}(\lambda^{(2)}u^{(2)}_k)
\end{array}\right).
\end{split}
\]

Direct computations show that $v=\lambda^{(1)}u^{(1)}+\lambda^{(2)}u^{(2)}+\lambda^{(3)}u^{(3)}$. Since $u^{(1)},u^{(2)}\in\F_{q}^k\subseteq\mU$ and $u^{(3)}\in\F_{q}^3\times\F_{q^2}^{k-3}\subseteq\mU$, we have that $\rho_{\mathrm{\rk}}(\mU)\le3$.

We now consider the possibility that one or more of the assumptions (I)-(III) do not hold. We will show that the argument holds with some minor modifications.

\begin{itemize}
    \item[(I)] Suppose that $\pi_{\beta_1}(v_1)=0$. 
\begin{itemize}
    \item[{\rm (a)}] If there exists an index $i\in\{2,\dots,k\}$ such that $\pi_{\beta_1}(v_i)\ne0$, repeat the passages written above replacing  $v_1$ with $v_i$.
    \item[{\rm (b)}] Otherwise, if there does not exist any $i\in\{2,\dots,k\}$ such that $\pi_{\beta_1}(v_i)\ne0$, set $\lambda^{(1)}=v_1$, $u^{(1)}=(1,0,\dots,0)^T$, and replace $\pi_{\beta_1}({v_1})^{-1}$ with the value zero in the formula for $\varphi_1$.
\end{itemize}
\item[(II)]Suppose that $\pi_{\beta_2}(v_2)=\pi_{\beta_2}(v_1)\varphi_1(v_1,v_2)$. 
\begin{itemize}
    \item[{\rm (a)}] If there exists an index $i\in\{3,\dots,k\}$ such that \[\pi_{\beta_2}(v_i)\ne\pi_{\beta_2}(v_1)\varphi_1(v_1,v_i),\] repeat the passages written above replacing $v_2$ with $v_i$.
    \item[{\rm (b)}] Otherwise, if there does not exist any $i\in\{3,\dots,k\}$ such that \[\pi_{\beta_2}(v_i)\ne\pi_{\beta_2}(v_1)\varphi_1(v_1,v_i),\] then set $\lambda^{(2)}=v_2-\lambda^{(1)}u_2^{(1)}$, $u^{(2)}=(0,1,\dots,0)^T$, 
    replace
    $$(\pi_{\beta_2}(v_2)-\pi_{\beta_2}(v_1)\varphi_1(v_1,v_3))^{-1}$$ with the value zero in the determination of $\lambda^{(3)},u^{(3)}$.
\end{itemize}
\item[(III)]Suppose that that $\pi_{\F_{q^2}}(v_3)\ne\pi_{\F_{q^2}}(v_1)\varphi_1(v_1,v_3)+\varphi_2(v_1,v_2,v_3)$.
\begin{itemize}
    \item[{\rm (a)}] If there exists an index $i\in\{4,\dots,k\}$ such that \[\pi_{\F_{q^2}}(v_i)\ne\pi_{\F_{q^2}}(v_1)\varphi_1(v_1,v_i)+\varphi_2(v_1,v_2,v_i),\] then replace $v_3$ with $v_i$ in the determination of $\lambda^{(3)}$, $u^{(3)}$.
    \item[{\rm (b)}] Otherwise, if there does not exist any $i\in\{4,\dots,k\}$ such that \[\pi_{\F_{q^2}}(v_i)\ne\pi_{\F_{q^2}}(v_1)\varphi_1(v_1,v_i)+\varphi_2(v_1,v_2,v_i),\] then the process has already terminated in the Step (II) and it is enough to set $\lambda^{(3)}=0$ and $u^{(3)}=(0,\dots,0)^T$.
\end{itemize}
\end{itemize}

\noindent To conclude the proof, we show that $\mU$ is exactly $3$-saturating. Let $\gamma_1,\gamma_2,\gamma_3\in\F_{q^4}$ be linearly independent over $\F_q$, and let $\overline{v}=(\gamma_1,\gamma_2,\gamma_3,0,\dots,0)^T\in\mU$. Due to the linear independence of the $\gamma_i$ over $\F_q$, it is not possible to saturate $\overline{v}$ with fewer than $3$ elements of $\mU$.
\end{proof}

The idea of the previous proof above, which is reminiscent of the Gram-Schmidt algorithm, allows us to obtain a construction which generalizes Proposition \ref{prop:construction_rho_4}. 

\begin{theorem}\label{thm:subgeo}
Let $r,t\geq 2$ and  $\F_{q^t}=\F_q[\alpha]$, for some $\alpha$, a root of an irreducible polynomial of degree $t$ over $\F_q$. For $h\geq 0$, the $[th + (r-1)t + 1,h+(r-1)t + 1]_{q^{rt}/q}$ system $\mathcal{U}$ defined by:
\[\mathcal{U}=\left\{\left(\begin{array}{c}u\\\hline w \end{array}\right) : u\in\F_{q}^{(r-1)t+1},w\in\F_{q^t}^h\right\},\]
which has an associated generator matrix given by:
\[G=\left[ \begin{array}{c|c|c|c|c}
I_{(r-1)t+1} & \mathbf{0}   & \mathbf{0}  & \mathbf{0} & \mathbf{0}      \\
\hline
\mathbf{0} & I_h & \alpha I_h & \dots & \alpha^{t-1} I_h \end{array}\right],\]
is rank-$((r-1)t+1)$-saturating. In particular,
\[s_{q^{rt}/q}(h+(r-1)t+1,(r-1)t+1)\leq th+(r-1)t+1.\]
\end{theorem}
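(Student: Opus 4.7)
The plan is to bypass the iterative, Gram--Schmidt-style procedure used in Proposition~\ref{prop:construction_rho_4} and, for each $v\in\F_{q^{rt}}^k$, exhibit a single explicit decomposition $v=\sum_{j=1}^{(r-1)t+1}\gamma_j w_j$ with $w_j\in\mathcal{U}$, after which Theorem~\ref{thm:characterization}(b) gives the upper bound $\rho\le(r-1)t+1$ on the rank-saturating radius. The matching lower bound will come from choosing $v$ with $\F_q$-linearly independent first $(r-1)t+1$ coordinates.

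The first step is to set $W:=\langle v_1,\dots,v_{(r-1)t+1}\rangle_{\F_q}\subseteq\F_{q^{rt}}$ and extend it, if needed, to an $\F_q$-subspace $W'$ of $\F_q$-dimension exactly $(r-1)t+1$; fix an $\F_q$-basis $\gamma_1,\dots,\gamma_{(r-1)t+1}$ of $W'$. The key observation, which is the main technical point, is that these $(r-1)t+1$ vectors automatically $\F_{q^t}$-span the whole of $\F_{q^{rt}}$: their $\F_{q^t}$-span $V$ satisfies $\dim_{\F_q}V\ge(r-1)t+1$, hence $\dim_{\F_{q^t}}V\ge\lceil((r-1)t+1)/t\rceil=r$, forcing $V=\F_{q^{rt}}$. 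This is precisely where the hypothesis $r,t\ge 2$ enters, and it explains why the ``$+1$'' in the length is exactly what is needed.

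Given this, for $i\le(r-1)t+1$ I would write $v_i=\sum_j w_{j,i}\gamma_j$ uniquely with $w_{j,i}\in\F_q$ (since $v_i\in W\subseteq W'$), and for $i>(r-1)t+1$ I would fix any representation $v_i=\sum_j w_{j,i}\gamma_j$ with $w_{j,i}\in\F_{q^t}$. Setting $w_j:=(w_{j,1},\dots,w_{j,k})^T$ puts $w_j$ in $\F_q^{(r-1)t+1}\times\F_{q^t}^h=\mathcal{U}$, and $v=\sum_{j=1}^{(r-1)t+1}\gamma_j w_j$. Expressing each $w_j$ in a fixed $\F_q$-basis $\{u_1,\dots,u_n\}$ of $\mathcal{U}$, the resulting coefficient vector $\lambda$ satisfies $\lambda_i\in\langle\gamma_1,\dots,\gamma_{(r-1)t+1}\rangle_{\F_q}$ for every $i$, so $\wt_{\rk}(\lambda)\le(r-1)t+1$ and Theorem~\ref{thm:characterization}(b) applies.

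For the matching lower bound I would take $v=(v_1,\dots,v_{(r-1)t+1},0,\dots,0)^T$ with $v_1,\dots,v_{(r-1)t+1}$ chosen $\F_q$-linearly independent in $\F_{q^{rt}}$ (possible because $(r-1)t+1\le rt$). In any decomposition $v=\sum_i\lambda_i u_i$ over a basis of $\mathcal{U}$, the first $(r-1)t+1$ entries of each $u_i$ lie in $\F_q$, so $v_1,\dots,v_{(r-1)t+1}\in\langle\lambda_1,\dots,\lambda_n\rangle_{\F_q}$ and $\wt_{\rk}(\lambda)\ge(r-1)t+1$. Combining, $\mathcal{U}$ is rank-$((r-1)t+1)$-saturating, and the stated inequality $s_{q^{rt}/q}(h+(r-1)t+1,(r-1)t+1)\le th+(r-1)t+1$ is simply $\dim_{\F_q}\mathcal{U}$. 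The main obstacle, modest as it is, lies in recognising the dimension count in the key observation; the rest is routine unpacking of definitions and the standard rank-weight bookkeeping of Theorem~\ref{thm:characterization}(b).
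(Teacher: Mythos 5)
Your proposal is correct, and it takes a genuinely different and cleaner route than the paper's proof. The paper fixes once and for all a set $\{\beta_1,\dots,\beta_{(r-1)t}\}$ with $\F_{q^{rt}}=\F_{q^t}+\langle\beta_1,\dots,\beta_{(r-1)t}\rangle_{\F_q}$ and then runs an iterative, Gram--Schmidt-style elimination on the coordinates of $v$, peeling off one projection $\pi_{\beta_\ell}$ at a time; this forces a lengthy case analysis (the analogues of cases (I)--(III) in Proposition~\ref{prop:construction_rho_4}) whenever a pivot/denominator vanishes. You instead adapt the basis to the vector $v$: extending $\langle v_1,\dots,v_{(r-1)t+1}\rangle_{\F_q}$ to an $\F_q$-space $W'$ of dimension exactly $(r-1)t+1$ and observing, via the count $t\cdot\dim_{\F_{q^t}}\langle W'\rangle_{\F_{q^t}}\geq (r-1)t+1>(r-1)t$, that $W'$ necessarily $\F_{q^t}$-spans all of $\F_{q^{rt}}$. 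This single dimension argument replaces the entire recursion: the first $(r-1)t+1$ coordinates of $v$ get $\F_q$-coordinates $w_{j,i}$ with respect to the $\gamma_j$ (so the vectors $w_j$ land in $\F_q^{(r-1)t+1}\times\F_{q^t}^h=\mU$), the remaining coordinates get $\F_{q^t}$-coordinates, and $\wt_{\rk}(\lambda)\leq (r-1)t+1$ follows because every $\lambda_i$ lies in $\langle\gamma_1,\dots,\gamma_{(r-1)t+1}\rangle_{\F_q}$. Your lower-bound argument (a vector whose first $(r-1)t+1$ entries are $\F_q$-independent forces $\wt_{\rk}(\lambda)\geq(r-1)t+1$, since the top block of $\mU$ has entries in $\F_q$) is the same as the paper's closing argument, just spelled out slightly more explicitly. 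The only quibble is your remark that the hypothesis $r,t\geq 2$ is ``precisely where'' the dimension count enters: the count $\lceil((r-1)t+1)/t\rceil=r$ holds for all $r,t\geq 1$, so that hypothesis is not actually load-bearing in your argument. What the paper's approach buys is an explicit algorithmic decomposition in terms of a fixed basis; what yours buys is brevity and the complete elimination of degenerate-case bookkeeping. Both are valid proofs of the theorem.
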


\begin{proof}
Let $\{\beta_1,\dots,\beta_{(r-1)t}\}\subseteq \F_{q^{rt}}$ such that 
$\F_{q^{rt}} = \F_{q^t} + \langle \beta_1,\dots,\beta_{(r-1)t} \rangle_{\Fq}$.
For any $a\in\F_{q^{rt}}$, write $a=\sum_{j\in [(r-1)t]}\pi_{\beta_j}(a)\beta_j+\pi_{\F_{q^t}}(a)$ for $\pi_{\beta_j}(a)\in\F_q$ and $\pi_{\F_{q^t}}(a)\in\F_{q^t}$.
Let $k=h+(r-1)t+1$ and consider a vector $v=(v_1,\ldots,v_k)^T\in\F_{q^{rt}}^k$; we will show that $v=\sum_{j \in [(r-1)t+1]}\lambda^{(j)} u^{(j)}$ for some $\lambda^{(j)}\in\F_{q^{rt}}$ and 
$u^{(j)}\in\mathcal{U}$.
Suppose first that $\pi_{\beta_1}(v_1)\neq 0$. Define the following: 
\[
\begin{split}
\lambda^{(1)}:=
\frac{v_1}{\pi_{\beta_1}(v_1)}, \qquad\qquad &
u^{(1)}:=
\left(\begin{array}{c}
\pi_{\beta_1}(v_1)\\\pi_{\beta_1}(v_2)\\\vdots\\ \pi_{\beta_1}(v_{(r-1)t+1})\\ \hline \pi_{\beta_1}(v_{(r-1)t+2})\\\vdots\\ \pi_{\beta_1}(v_{h + (r-1)t + 1})
    \end{array}\right).
\end{split}
\]
If $\pi_{\beta_1}(v_1)=0$, then proceed similarly as described in (I) of Proposition \ref{prop:construction_rho_4}.\\
Now recursively define $\lambda^{(\ell)}$ and $u^{(\ell)}$ as follows:
for $\ell\in\{2,\ldots,(r-1)t\}$, we set
\[
\lambda^{(\ell)}:=
\frac{v_{\ell}-\sum_{i\in [\ell-1]}(\lambda^{(i)}u^{(i)}_\ell)-\sum_{j\in [\ell-1]}\pi_{\beta_j}(v_{\ell}-\sum_{i\in [j]}\lambda^{(i)}u^{(i)}_\ell)\beta_j}{\pi_{\beta_\ell}(v_{\ell}-\sum_{i\in [\ell-1]}\lambda^{(i)}u^{(i)}_\ell)},
\]

\[
u^{(\ell)}:=
\left(\begin{array}{c}
\mathbf{0}_{(\ell-1)\times 1}\\\pi_{\beta_\ell}(v_\ell)\\ \pi_{\beta_\ell}(v_{\ell+1})\\\vdots\\ \pi_{\beta_\ell}(v_{(r-1)t+1})\\ \hline \pi_{\beta_\ell}(v_{(r-1)t+2})\\\vdots\\ \pi_{\beta_\ell}(v_{h + (r-1)t + 1})
    \end{array}\right)
-
\sum_{i\in [\ell-1]}
    \left(\begin{array}{c}
\mathbf{0}_{(\ell-1)\times 1}\\\pi_{\beta_\ell}(\lambda^{(i)}u^{(i)}_{\ell}) \\\pi_{\beta_\ell}(\lambda^{(i)}u^{(i)}_{\ell+1}) \\\vdots\\\pi_{\beta_\ell}(\lambda^{(i)}u^{(i)}_{(r-1)t+1})\\ \hline \pi_{\beta_\ell}(\lambda^{(i)}u^{(i)}_{(r-1)t+2})\\\vdots\\ \pi_{\beta_\ell}(\lambda^{(i)}u^{(i)}_{h + (r-1)t + 1})
    \end{array}\right),
\]
under the assumption that 
\begin{equation}\label{eq:cond1}
\pi_{\beta_\ell}(v_{\ell}-\sum_{i\in [\ell-1]}\lambda^{(i)}u^{(i)}_\ell)\ne0, \text{ for }\ell\in[(r-1)t].
\end{equation}
If (\ref{eq:cond1}) does not hold, then this means that $\lambda^{(\ell)}$ and $u^{(\ell)}$ are not necessary for the decomposition, and it is possible to proceed to the next step. We proceed in a manner similar to (II) of Proposition \ref{prop:construction_rho_4}.

Furthermore, define:
\[
\lambda^{((r-1)t+1)}:=
\pi_{\F_{q^t}}(v_{(r-1)t+1})-\sum_{i\in [(r-1)t]}\pi_{\F_{q^t}}(\lambda^{(i)}u^{(i)}_{(r-1)t+1}),
\]
 and
\[
u^{(r-1)t+1}:=
\left(\begin{array}{c}
\mathbf{0}_{((r-1)t)\times 1}\\ 1\\ \hline\\ \frac{\pi_{\F_{q^t}}(v_{(r-1)t+2})}{\lambda^{((r-1)t+1)}}\\ \vdots \\ \frac{\pi_{\F_{q^t}}(v_{h + t(s-1) + 1})}{\lambda^{((r-1)t+1)}}
    \end{array}\right)
-
\frac{1}{\lambda^{((r-1)t+1)}}\sum_{i\in [(r-1)t]}\left(\begin{array}{c}
\mathbf{0}_{((r-1)t+1)\times 1}\\ \hline \pi_{\F_{q^t}}(\lambda^{(i)}u^{(i)}_{(r-1)t+2})\\\vdots\\ \pi_{\F_{q^t}}(\lambda^{(i)}u^{(i)}_{h + t(s-1) + 1}) \end{array}\right),
\]
where we assume that 
\begin{equation}\label{eq:cond2}
\pi_{\F_{q^t}}(v_{(r-1)t+1})\ne\sum_{i\in [(r-1)t]}\pi_{\F_{q^t}}(\lambda^{(i)}u^{(i)}_{(r-1)t+1}).
\end{equation}
Finally, if (\ref{eq:cond2}) does not hold, then continue as in (III) of Proposition \ref{prop:construction_rho_4}.

In order to prove that $\mU$ is $((r-1)t+1)$-saturating, we show that the following hold:
\begin{itemize}

    \item[(i)]
    $v_\ell=\sum_{i\in [\ell]} \lambda^{(i)}u^{(i)}_\ell$ for $\ell\in[(r-1)t]$, 
    \item[(ii)]$\pi_{\beta_\ell}(v_k)=\pi_{\beta_\ell}\left(\sum_{i\in [\ell]}\lambda^{(i)}u^{(i)}_k\right)=\pi_{\beta_\ell}\left(\sum_{i\in [j]}\lambda^{(i)}u^{(i)}_k\right)$ for 
    $\ell\in[(r-1)t]$, $k\in\{\ell+1,\dots,th + t(s-1) + 1\}$, and $j\in \{\ell,\ldots,(r-1)t+1\}$,
    \item[(iii)] $\sum_{i\in [(r-1)t+1]}\lambda^{(i)}u^{(i)}=v$.
\end{itemize}

Direct computations show that
\[
\lambda^{(1)}u^{(1)}_1=\frac{v_1}{\pi_{\beta_1}(v_1)}\pi_{\beta_1}(v_1)=v_1,
\]
and that, furthermore: 
\[
\begin{split}
\sum_{i\in [\ell]}\lambda^{(i)}u^{(i)}_\ell=&\lambda^{(\ell)}u^{(\ell)}_\ell+\sum_{i\in [\ell-1]} (\lambda^{(i)}u^{(i)}_\ell)\\ 
=&\frac{v_{\ell}-\sum_{i\in [\ell-1]}(\lambda^{(i)}u^{(i)}_\ell)-\sum_{j\in [\ell-1]}\pi_{\beta_j}(v_{\ell}-\sum_{i\in [j]}\lambda^{(i)}u^{(i)}_\ell)\beta_j}{\pi_{\beta_\ell}(v_{\ell}-\sum_{i\in [\ell-1]}\lambda^{(i)}u^{(i)}_\ell)}\pi_{\beta_\ell}\left(v_{\ell}-\sum_{i\in [\ell-1]}\lambda^{(i)}u^{(i)}_\ell\right)\\
&+\sum_{i\in [\ell-1]} (\lambda^{(i)}u^{(i)}_\ell)\\
=&v_{\ell}-\sum_{i\in [\ell-1]} (\lambda^{(i)}u^{(i)}_\ell)+\sum_{i\in [\ell-1]} (\lambda^{(i)}u^{(i)}_\ell)\\
=&v_\ell.
\end{split}
\]
Hence (i) holds.

We now prove (ii), noting that by construction, $\pi_{\beta_i}(\lambda^{(i)})=1$, for $i\le (r-1)t$. Moreover, for $\ell\in[(r-1)t]$, it is straightforward to show that the second equality in (ii) holds, as $\lambda^{(i)}\in \langle\beta_i,\beta_{i+1},\dots,\beta_{(r-1)t}\rangle_{\F_q} + \F_{q^2}$ and $u^{(i)}\in\F_{q}^{th + (r-1)t + 1}$ for $i\le (r-1)t$. 
Firstly, we have that 
\[
\pi_{\beta_1}(\lambda^{(1)}u^{(1)}_k)=\frac{\pi_{\beta_1}(v_1)}{\pi_{\beta_1}(v_1)}u^{(1)}_1=\pi_{\beta_1}(v_1).
\]

Consider now $\ell\in\{2,\ldots,(r-1)t\}$. By construction, we have that: 
\[
\begin{split}
\pi_{\beta_\ell}\left(\sum_{i\in [\ell]}\lambda^{(i)}u^{(i)}_k\right)=
&\pi_{\beta_\ell}(\lambda^{(\ell)}u^{(\ell)}_k)+\pi_{\beta_\ell}\left(\sum_{i\in [\ell-1]} (\lambda^{(i)}u^{(i)})\right)\\ 
=&\pi_{\beta_\ell}(v_\ell)-\sum_{i\in [\ell-1]} \pi_{\beta_\ell}(\lambda^{(i)}u^{(i)}_k)+\pi_{\beta_\ell}\left(\sum_{i\in [\ell-1]} (\lambda^{(i)}u^{(i)}_k)\right)\\
=&\pi_{\beta_\ell}(v_\ell)-\sum_{i\in [\ell-1]} \pi_{\beta_\ell}(\lambda^{(i)}u^{(i)})_k+\sum_{i\in[\ell-1]} \pi_{\beta_\ell}(\lambda^{(i)}u^{(i)})\\
=&\pi_{\beta_\ell}(v_\ell),
\end{split}
\] 
which implies (ii).

To prove (iii) it remains to show that $\pi_{\F_{q^t}}(v_k)=\pi_{\F_{q^t}}\left(\sum_{i\in[(r-1)t+1]}\lambda^{(i)}u^{(i)}_k\right)$, for $k\in\{(r-1)t+1,\dots,th + t(s-1) + 1\}$. 

By construction, for $k\in\{(r-1)t+1,\ldots,th + t(s-1) + 1\}$ we have:
\[
\begin{split}
\pi_{\F_{q^t}}\left(\sum_{i\in[(r-1)t+1]}\lambda^{(i)}u^{(i)}_{k}\right)=&\lambda^{((r-1)t+1)}u^{((r-1)t+1)}_{k}+\sum_{i\in[(r-1)t]}\lambda^{(i)}u^{(i)}_{k}\\
=&\pi_{\F_{q^t}}(v_{(r-1)t+1})-\sum_{i\in [(r-1)t]}\pi_{\F_{q^t}}(u^{(i)}_k)+\sum_{i\in [(r-1)t]}\lambda^{(i)}u^{(i)}_k\\
=&\pi_{\F_{q^t}}(v_k).
\end{split}
\]

Since $u^{(\ell)}\in\F_{q}^{(r-1)t+h+1}\subseteq\mU$, for $\ell\in[(r-1)t]$, and $u^{((r-1)t+1)}\in\F_q^{(r-1)t}\times\F_{q^{t}}^{h}\subseteq\mU$ we have that $\rho_{\mathrm{rk}}(\mU)\le (r-1)t+1$.
Moreover, using the same argument as in the proof of Proposition~\ref{prop:construction_rho_4}, $\mU$ is exactly $((r-1)t+1)$-saturating as taking $\gamma_1,\dots,\gamma_{(r-1)t+1}\in\F_{q^{(r-1)t+1}}$ be linearly independent over $\F_q$, the vector $\overline{v}=(\gamma_1,\dots,\gamma_{(r-1)t+1},0,\dots,0)^T\in\mU$ cannot be saturated with fewer than $(r-1)t+1$ elements of $\mU$.

\end{proof}

\begin{remark}
Let us suppose $q>2$. Note that for $h\geq 0$, 
\[\frac{rth+((r-1)t+1)^2}{(r-1)t+1}\leq s_{q^{rt}/q}((r-1)t+1+h,(r-1)t+1)\leq th+(r-1)t+1\]
and the difference between the upper and the lower bound is
\[\frac{(r-1)t(t-1)}{(r-1)t+1}\cdot h = \left( t-1 -\frac{t-1}{(r-1)t+1} \right)h <(t-1) h\]
Note that, for $t=2$ and $h=1$, for all $r\geq 2$ the difference is strictly less than $1$, so that
$$s_{q^{2r}/q}(2r,2r-1)=2r+1$$
for $q>2$.

In the case $q=2$, with a similar argument we get 
$$2r\leq s_{2^{2r}/2}(2r,2r-1)\leq 2r+1.$$
Now, clearly $s_{q^{2r}/q}(2r,2r-1)$ cannot be equal to $2r$ (for any $q$), since the only $[2r,2r]_{q^m/q}$ code is the full space and in this case $\rho=2r$. So, $s_{2^{2r}/2}(2r,2r-1)=2r+1$.
\end{remark}

\section{Conclusion}

For the convenience of the reader, we summarize the main results on $s_{q^m/q}(k,\rho)$ proved in this paper. First,
by  Theorem \ref{thm_LowerBound} and Theorem \ref{thm:generalUB},
\begin{align*}
\left\lceil\frac{mk}{\rho}\right\rceil -m+\rho\leq  s_{q^m/q}(k,\rho)\leq m(k-\rho)+\rho, & & \text{for $q>2$ and $\rho>1$,}\\
\left\lceil\frac{mk-1}{\rho}\right\rceil -m+\rho\leq  s_{q^m/q}(k,\rho)\leq m(k-\rho)+\rho, & & \text{for $q=2$ and $\rho>1$,}\\
s_{q^m/q}(k,1)=m(k-1)+1, & & \text{for all $q$}.
\end{align*}
The Monotonicity Theorem (Theorem \ref{th:ineq}) and the Direct Sum Theorem (Theorem \ref{prop:DirectSum}) state that, for all positive integers $m,k,k'$, $\rho\in[\min\{k,m\}]$, $\rho'\in[\min\{k',m\}$,
\begin{itemize}
    \item[{\rm (a)}] If $\rho<\min\{k,m\}$, then $s_{q^m/q}(k,\rho+1) \leq s_{q^m/q}(k,\rho).$
    \item[{\rm (b)}] $s_{q^m/q}(k,\rho)<s_{q^m/q}(k+1,\rho)$.
    \item[{\rm (c)}] If $\rho<m$, then $s_{q^m/q}(k+1,\rho+1) \leq s_{q^m/q}(k,\rho)+1$.
    \item[{\rm (d)}] If $\rho+\rho'\leq \min\{k+k',m\}$, $s_{q^m/q}(k+k',\rho+\rho') \leq s_{q^m/q}(k,\rho)+s_{q^m/q}(k',\rho')$.
\end{itemize}
The upper bound is sharpened for particular cases: for every $r,k\geq 2$, thanks to the construction using linear cutting blocking sets (Corollary \ref{coro:lincutsat}) we have
\[s_{q^{r(k-1)}/q}(k,k-1)\leq 2k+r-2.\]
Using subgeometries (Theorem \ref{thm:subgeo}), for $t,s\geq 2$ and $h\geq 0$ we have:
\[s_{q^{rt}/q}((r-1)t+1+h,(r-1)t+1)\leq th+(r-1)t+1.\]
Finally, we list some cases for which $s_{q^m/q}(k,\rho)$ is determined, namely:
\begin{align*}
    s_{q^m/q}(k,1) & = m(k-1)+1, & \text{ for all }m,k\geq 2,  \\
    s_{q^m/q}(k,k) & = k, & \text{ for all }m,k\geq 2,\\
    s_{q^{2r}/q}(3,2) & = r+2, & \text{ for } r\neq 3,5\bmod 6\text{ and }r\geq 4,
    \\
    s_{q^{2r}/q}(3,2) & = r+2, & \text{ for } \gcd(r,(q^{2s}-q^s+1)!)=1, r\text{ odd}, 1\leq s\leq r, \gcd(r,s)=1,
    \\
    s_{q^{10}/q}(3,2) & = 7, & \text{ for } q=p^{15h+s}, p\in\{2,3\}, \gcd(s,15)=1,\\
     s_{q^{10}/q}(3,2) & = 7, & \text{ for } q=5^{15h+1},\\
    s_{q^{10}/q}(3,2) & = 7, & \text{ for } q \text{\ odd}, q= 2,3\bmod 5 \text{ and for } q=2^{2h+1}, h\geq 1,
    \\
    s_{q^{2r}/q}(2r,2r-1)  &=2r+1, & \text{ for all }r\geq 2.
\end{align*}

\section*{Acknowledgments}

The results of this paper are the result of a collaboration that arose within the IRC-PHC Ulysses project ``Geometric Constructions of Codes for Secret Sharing Schemes''. The research of the first author was partially supported by the Irish Research Council, grant n. GOIPD/2020/597. The second author was partially supported by the ANR-21-CE39-0009 - BARRACUDA (French \emph{Agence Nationale de la Recherche}).
The authors express their deep gratitude to Ferdinando Zullo and Julien Lavauzelle for the inspiring discussions on the subject of the paper. The authors wish to thank the anonymous reviewers for their meticulous reading of this manuscript and whose comments greatly improved this work.



\section*{Data Availability Statement}
Data sharing not applicable to this article as no datasets were generated or analysed during the current study.

\bibliographystyle{abbrv}
\bibliography{references.bib}

\end{document}